\newcommand{\R}{\mathbb{R}}
\newtheorem{theorem}{Theorem}[section]
\newtheorem{lemma}[theorem]{Lemma}
\newtheorem{proposition}[theorem]{Proposition}
\newtheorem{corollary}[theorem]{Corollary}
\newtheorem{remark}[theorem]{Remark}
\newtheorem{question}[theorem]{Question}
\newtheorem*{remark*}{Remark}
\numberwithin{equation}{section}
\numberwithin{figure}{section}
\renewcommand{\div}{\mathrm{div}} 
\def\intave#1{\int_{#1}\hbox{\llap{$\raise2.3pt\hbox{\vrule
				height.9pt width7pt}\phantom{\scriptstyle{#1}}\mkern-2mu$}}}
\author{Qinfeng Li}
\address{School of Mathematics, Hunan University, Changsha, Hunan, China.}
\email{liqinfeng1989@gmail.com}
\author{Hang Yang}
\address{School of Mathematics, Hunan University, Changsha, Hunan, China.}
\email{hangyang0925@gmail.com}
\thanks{Research of Qinfeng Li is supported by National Key R\&D Program of China (2022YFA1006900) and the National Science Fund of China General Program (No. 12471105).}
\subjclass[2020]{49K20, 49K40, 49R05}
\keywords{Flow method, Shape Optimization, Saint-Venant inequality, stability condition}
\begin{document}
\title[Shape Optimization Problems with Radially Decreasing Heat Source]{Heat Transfer Shape Optimization: Stability and Non-Optimality of the Ball}
	
	\begin{abstract}  
	This paper investigates shape optimization problems in the context of heat transfer, with a focus on the stability and non-optimality of round domains under Robin boundary conditions. Using the flow approach and Steklov eigenvalue estimates, we derive the necessary and sufficient stability conditions for a ball to maximize the averaged heat when the heat source is radially decreasing. Our results show that, counterintuitively, a ball may not be optimal for maximizing the averaged heat under heat convection, even with radially decreasing heat sources located on the center of the ball. Moreover, we identify stability-breaking phenomena by giving precise values of thresholds, which depend on the Robin coefficient, dimension, and volume constraints. Additionally, we demonstrate that a ball can maximize the averaged temperature under certain conditions and we also explore optimal shapes in thin insulation problems.
	\end{abstract}
	
	\maketitle
	
	\vskip0.2in

	\section{Introduction}
\subsection{Motivation}
Let $\Omega\subset\R^{n}$ be a bounded domain with Lipschitz boundary. We consider the following energy functional arising in convection heat transfer background, given by:
\begin{equation}
		J_{\beta}^{f}(\Omega):=\inf \Big\{\frac{1}{2}\int_{\Omega}|\nabla u|^{2} \, dx+\frac{\beta}{2}\int_{\partial\Omega}u^{2} \, d\sigma-\int_{\Omega}fu \, dx:u\in H^{1}(\Omega) \Big\},
		\label{eq:Robin energy}
\end{equation} 
where $f$ is a given non-negative function locally in $L^2(\mathbb{R}^n)$, $\sigma$ is the volume element of $\partial \Omega$ and $\beta>0$ is a constant related to the heat convection coefficient. The minimizer $u_{\Omega}^{f,\beta}$ of \eqref{eq:Robin energy} satisfies the Robin boundary value problem:
\begin{align}
		\begin{cases}
			-\Delta u=f \quad & \mbox{in $\Omega$}\\
			\frac{\partial u}{\partial\nu}+\beta u=0 \quad & \mbox{on $\partial \Omega$},
		\end{cases}
		\label{eq:Robin equations}
\end{align}
where $\nu$ denotes the outward unit normal to $\partial \Omega$. The solution $u_{\Omega}^{f,\beta}$ can be viwed as the steady-state temperature under the heat convection condition in the thermal body $\Omega$, and the function $f$ can be viewed as the heat source distributed in $\mathbb{R}^n$. The boundary condition of \eqref{eq:Robin equations} reflects Newton’s law of cooling combined with Fourier’s law of heat transfer, once we assume that the external temperature is zero.

When $\beta=+\infty$, the functional $J_\beta^f(\Omega)$ becomes the following:
\begin{align}
		J^{f}_{\infty}(\Omega):=\inf \Big\{\frac{1}{2}\int_{\Omega}|\nabla u|^2
  \,dx-\int_{\Omega} fu \,dx:u\in H^{1}_{0}(\Omega) \Big\}.
		\label{eq:Dirichlet energy}
\end{align} 
The minimizer $u_{\Omega}^{f}$ of \eqref{eq:Dirichlet energy} solves the Dirichlet problem: 
\begin{align}
	\label{Dirichletcondition}
	\begin{cases}
		-\Delta u=f \quad & \mbox{in $\Omega$,}\\
	 u=0 \quad & \mbox{on $\partial \Omega$.}
	\end{cases}
\end{align} 
The solution $u_\Omega^f$ then describes the steady-state temperature under the heat conduction condition in the thermal body $\Omega$, and again the external temperature is assumed to be $0$.

Applying the divergence theorem for  $J_{\infty}^{f}(\Omega)$ and $J_{\beta}^{f}(\Omega)$ yields the following identities:
\begin{align}
\label{average-heat}
-2J_{\infty}^{f}(\Omega)=\int_{\Omega}fu_{\Omega}^{f}dx\quad \mbox{and}\quad -2J_{\beta}^{f}(\Omega)=\int_{\Omega}fu_{\Omega}^{f,\beta}dx,
\end{align}
which often represent the total heat of the thermal body $\Omega$ under the conditions of heat conduction and heat convection, respectively. Therefore, the problem of finding the optimal shape to maximize the averaged heat under these conditions is equivalent to the problem of minimizing the energy functionals $J^{f}_{\infty}(\cdot)$ and $J^{f}_{\beta}(\cdot)$ among domains with fixed volume. We also mention that similar energy minimization problems from heat transfer background are considered in \cite{BW}, \cite{BBN}, \cite{BBN1}, \cite{Buttazzo}, \cite{Dam}, \cite{DP00}, \cite{DLW}, \cite{HLL1}, etc.

 
$f\equiv 1$ is a classical special case for such problems. In this case, the quantity $\int_{\Omega}u_{\Omega}^{1}dx$ is the torsional rigidity of $\Omega$. The classical Saint-Venant inequality states that, among domains of fixed volume, the ball maximizes the torsional rigidity, and therefore 
\begin{align}
\label{eq:SV}
J^{1}_{\infty}(\Omega^{\sharp})\leq J^{1}_{\infty}(\Omega),
\end{align}
where $\Omega^{\sharp}$ denotes the ball centered at the origin with the same volume as that of $\Omega$.
More recently, Bucur-Giacomini \cite{Bucur} and Alvino-Nitsch-Trombetti \cite{AN} established this same optimality property of balls in Robin problems. They proved that for all constant $\beta>0$, the balls always minimize $J_{\beta}^{1}(\cdot)$ among domains with fixed volume. That is,
\begin{align}
\label{eq:BG}
J^{1}_{\beta}(\Omega^{\sharp})\leq J^{1}_{\beta}(\Omega).
\end{align}

Inequalities \eqref{eq:SV} and \eqref{eq:BG} demonstrate that in both heat conduction and heat convection models, with a uniformly distributed heat source (i.e., $f$ is a positive constant) and a fixed volume of the thermal body, the total heat is maximized when the shape is round. This raises the question of whether similar isoperimetric results apply when the heat source $f$ decreases radially.

This question is rather natural from the physical point of view, since the radially decreasing $f$ models heat sources centered at a single point with intensity diminishing outward——a common scenario in many thermal design problems. For such a heat source, a crucial question is  to determine the optimal shape and the position of the center of the source to maximize the total heat inside the thermal body. When the volume of the thermal body is fixed, it is reasonable to expect that for a point heat source located at the origin, the total heat \eqref{average-heat} is maximized when the domain is as close as possible to the heat source. The most natural choice, which is also from the mathematical taste in view of \eqref{eq:SV} and \eqref{eq:BG}, is to choose the thermal body to be a ball and place the center of the radially decreasing heat source at the center of the ball.

For Dirichlet conditions, this intuition is indeed correct, which can be easily obtained as applications of the maximum principle and the classical Talenti's pointwise comparison result \cite{Talenti} for Dirichlet boundary conditions, see Proposition \ref{thm:global} below in this paper. 

However, an interesting and somewhat counterintuitive phenomenon arises in the Robin boundary conditions: even when the domain is a ball centered at the maximal point of the radially decreasing function $f$, we can show that it may even fail to be a local maximizer of the total heat (i.e. the local minimizer of $J_{\beta}^{f}(\Omega)$).

To present the result rigorously, we first introduce some terminologies. We let $F(t,x)$ be the \textit{flow map} generated by a smooth vector field $\eta \in C^{\infty}(\mathbb{R}^n, \mathbb{R}^n)$, that is, 
	\begin{align*}
		\begin{cases}
			\frac{d}{dt}F(t,x)=\eta\circ F(t,x)\quad  &t \ne 0\\
			F(0,x)=x\quad & t=0.
		\end{cases}
	\end{align*}
	We also denote $F(t,x)$ by $F_t(x)$, and hence $F_t$ is a local diffeomorphism when $|t|$ is small. We say that $F_t$ or $\eta$ preserves the volume of $\Omega$, if $|F_t(\Omega)|=|\Omega|$. Let $\mathcal{E}(\cdot)$ be a $C^2$ shape functional, and we say that $\Omega$ is \textit{stationary} to $\mathcal{E}(\cdot)$, if $\frac{d}{dt}\Big|_{t=0} \mathcal{E}(F_t(\Omega))=0$ for any smooth flow map $F_{t}(\cdot)$ preserving the volume of $\Omega$ along the flow. We say that $\Omega$ is \textit{stable} to $\mathcal{E}(\cdot)$, if $\Omega$ is stationary to $\mathcal{E}(\cdot)$ and $\Omega$ satisfies $\frac{d^2}{dt^2}\Big|_{t=0} \mathcal{E}(F_t(\Omega))\ge 0$ for any smooth volume preserving flow map $F_t(\cdot)$. 

With the notations above, we are able to establish the necessary and sufficient condition for the stability of ball centered at the origin, when the heat source $f$ is radial.
\begin{theorem}
\label{thm:Robin stable iff condition}
Let $f>0$ be a smooth radial function about the origin, and $B_{R} \subset \mathbb{R}^n$ be the ball of radius of $R$ centered at the origin. Then for any $\beta>0$, $B_{R}$ is a stationary shape with respect to $J_{\beta}^{f}(\cdot)$ defined in (\ref{eq:Robin energy}). Moreover, $B_{R}$ is a stable shape with respect to $J_{\beta}^{f}(\cdot)$ if and only if
\begin{equation}
\left(f(R)-\frac{n-1-R\beta}{n}\bar{f}(R) \right)\left(f(R)-\bar{f}(R)\right)+\frac{1+\beta R}{n\beta}f_{r}(R)\bar{f}(R)\leq 0,
\label{eq: Robin stable iff condition}
\end{equation}
where $\bar{f}(R)=\frac{1}{|B_{R}|}\int_{B_{R}}f(x) \, dx$, and $f(R)$ and $f_{r}(R)$ are the values of $f(x)$ and the directional derivative of $f(x)$ in the radial direction restricted to $\partial B_{R}$, respectively.
\end{theorem}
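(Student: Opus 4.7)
The plan is to apply the flow (Hadamard) approach to shape calculus at $\Omega=B_R$ and then diagonalise the resulting shape Hessian via a spherical-harmonic decomposition on $\partial B_R$. Write $u=u_{B_R}^{f,\beta}$; since $f$ is radial, $u$ is radial too, and integrating $-\Delta u=f$ over $B_R$ together with the Robin boundary condition gives the identities $u(R)=\frac{R\,\bar f(R)}{n\beta}$ and $u_r(R)=-\frac{R\,\bar f(R)}{n}$, which will cause the coefficients $\frac{n-1-R\beta}{n}$ and $\frac{1+R\beta}{n\beta}$ in \eqref{eq: Robin stable iff condition} to emerge naturally at the end of the computation.

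For stationarity I would first compute $\frac{d}{dt}\big|_{t=0}J_\beta^f(F_t(B_R))$ by the standard Hadamard formula for the Robin energy, obtaining a single boundary integral $\int_{\partial B_R}\Psi\,V\,d\sigma$ with $V=\eta\cdot\nu$ and $\Psi$ an explicit function of $u$, $u_r$, $f$, $\beta$ and the (constant) mean curvature of $\partial B_R$. Radial symmetry makes $\Psi$ constant on $\partial B_R$, so the volume-preserving constraint $\int_{\partial B_R}V\,d\sigma=0$ immediately yields $J'(0)=0$.

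For the second variation I would differentiate \eqref{eq:Robin equations} along the flow to identify the shape derivative $\dot u$ of the state as the solution of the auxiliary boundary value problem
\begin{align*}
\begin{cases}
-\Delta\dot u=0 & \text{in }B_R,\\
\frac{\partial \dot u}{\partial \nu}+\beta\,\dot u=h[V] & \text{on }\partial B_R,
\end{cases}
\end{align*}
where $h[V]$ is a linear boundary operator in $V$ with coefficients depending on $u$, $u_r$, $f$, $f_r$ and $\beta$. Substituting into the second variation of \eqref{eq:Robin energy} and incorporating the Lagrange multiplier from the volume constraint (which, by radial symmetry, is constant on $\partial B_R$), the shape Hessian collapses to a boundary quadratic form of the schematic shape $\int_{\partial B_R}(aV^2+bV\dot u)\,d\sigma$. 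I would then diagonalise this form by expanding $V=\sum_{k\geq 1}V_k$ in spherical harmonics on $\partial B_R$ (the $k=0$ mode vanishes by volume preservation). Each $V_k$ is an eigenmode of the Robin--Steklov map on $B_R$ with explicit eigenvalue $\mu_k(R,\beta)=\frac{k+\beta R}{R}$, computed from the harmonic extension $r^k Y_k/R^k$. This diagonalises both the quadratic-in-$V$ piece and the coupling with $\dot u$, so the Hessian takes the form $\sum_{k\geq 1}Q_k(R,\beta,f)\,\lVert V_k\rVert^2$ with $Q_k$ computable in closed form, and stability is equivalent to $Q_k\geq 0$ for every $k\geq 1$.

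The main obstacle — and the source of the precise inequality \eqref{eq: Robin stable iff condition} — is to identify the worst mode. Since the claimed inequality carries no $k$-dependence, a monotonicity argument in $k$ must reveal that a single low-index mode controls the sign of $Q_k$; the natural candidate is $k=1$, since degree-one spherical harmonics are exactly the normal velocities produced by translations of $B_R$ and, with $f$ held fixed in $\mathbb{R}^n$, these are the lowest symmetry-breaking perturbations. Once that reduction is established, plugging $\mu_1=\frac{1+\beta R}{R}$ together with the explicit boundary values $u(R)$ and $u_r(R)$ into $Q_1\geq 0$ should, after algebraic simplification, collapse into exactly \eqref{eq: Robin stable iff condition}, thereby completing the proof of both the sufficiency and necessity.
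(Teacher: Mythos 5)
Your outline is a viable route and ends at the correct inequality, but it differs from the paper's argument and leaves its own crux unproven. The paper does \emph{not} diagonalise the Hessian over spherical harmonics: after computing the second variation (which contains the term $\tfrac{\beta}{2}u(R)^2\int_{\partial B_R}(-\Delta_{\partial B_R}\zeta-\tfrac{n-1}{R^2}\zeta)\zeta\,d\sigma$, the $k$-independent terms $-\int f_r u\zeta^2$ and $\int u_r\zeta^2(u_{rr}-\beta^2u)$, and the coupling $\int v\zeta(u_{rr}-\beta^2u)$ with $v=u'(0)$), it bounds the Hessian from below by discarding the first term via $\lambda_1(\partial B_R)=\tfrac{n-1}{R^2}$ and by controlling the coupling term with the second Steklov eigenvalue bound $\int_{\partial B_R}v^2\,d\sigma\le R\int_{B_R}|\nabla v|^2\,dx$ (legitimate because $\int_{\partial B_R}v\,d\sigma=0$) together with Cauchy--Schwarz; it then checks that every inequality is an equality when $\eta$ is a constant vector field, which gives necessity. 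Your mode-by-mode approach buys more (the full spectrum of the Hessian) at the cost of having to prove the reduction to $k=1$, which you only assert (``a monotonicity argument in $k$ must reveal\dots''). That step is the real content and it does go through: writing $\zeta=V_k$ a degree-$k$ spherical harmonic, the harmonic extension satisfies $v=c_k r^kY_k/R^k$ with $c_k=-\tfrac{R(u_{rr}-\beta^2u)}{k+\beta R}$ (using $u_r=-\beta u$), so per unit $\|V_k\|_{L^2(\partial B_R)}^2$,
\begin{equation*}
Q_k=\frac{\beta}{2}\,u(R)^2\Bigl(\frac{k(k+n-2)-(n-1)}{R^2}\Bigr)-f_r(R)u(R)+u_r(u_{rr}-\beta^2u)-\frac{R}{k+\beta R}(u_{rr}-\beta^2u)^2,
\end{equation*}
and both $k$-dependent terms are nondecreasing in $k\ge 1$, whence $Q_k\ge Q_1$ and $Q_1\ge0$ is exactly \eqref{eq: Robin stable iff condition} after substituting \eqref{eq:represent u(R)}--\eqref{eq:represent u_rr(R)}. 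You should also state why the form decouples across modes (linearity of the boundary value problem for $v$ in $\zeta$ plus $L^2(\partial B_R)$-orthogonality of spherical harmonics of different degrees), and be aware that for a general volume-preserving flow the second derivative a priori contains terms in $\operatorname{div}\eta$ and the tangential part of $\eta$; these are eliminated at the stationary ball using $\int_{\partial B_R}\zeta\,d\sigma=0$ and $\int_{\partial B_R}\zeta\,\operatorname{div}\eta\,d\sigma=0$, which your ``Lagrange multiplier'' remark glosses over. With those points filled in, your argument is complete and its necessity direction (translations realise the $k=1$ mode) coincides with the paper's.
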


In fact, our proof indicates that if \eqref{eq: Robin stable iff condition} is violated, then $J_\beta^f(\cdot)$ strictly increases after a translation of $B_R$. The translation variation also plays an important role in our analysis, which essentially motivates us to use the Steklov eigenvalue estimates to derive the necessary and sufficient condition \eqref{eq: Robin stable iff condition}.

To further clarify the stability condition of $B_{R}$ for a given radially decreasing function $f$, we rewrite the inequality \eqref{eq: Robin stable iff condition} as follows:
\begin{align}
\label{eq:rearrange}
A_{0}\leq \beta A_{1}+\frac{1}{\beta}A_{2},
\end{align}
where $A_{0}=\Big(f(R)-\bar{f}(R)\Big)\Big(f(R)-\frac{n-1}{n}\bar{f}(R)\Big)+\frac{R}{n}f_{r}(R)\bar{f}(R)$, $A_{1}=\frac{ R}{n}\bar{f}(R)(\bar{f}(R)-f(R))$ and $A_{2}=-\frac{1}{n}f_{r}(R)\bar{f}(R)$. When $f$ is radially decreasing, $A_1, A_2\ge 0$, and $A_1=0$ if and only if $f$ is a constant in $B_R$. Also, $A_2=0$ if and only if $f_r(R)=0$. The sign of $A_0$ is not determined. 

Clearly, when $A_0\le 2\sqrt{A_1A_2}$, the ball $B_{R}$ is always stable for any $\beta>0$. Note that roughly speaking, the condition $A_0 \le 2\sqrt{A_1A_2}$ is automatically satisfied if the function $f$ does not decrease too fast in the radial direction. For instance, if $f$ satisfies $f(R) \geq \frac{n-1}{n} \bar{f}(R)$, then $A_0\le 0$, and thus $B_R$ is always stable for $J_{\beta}^{f}(\cdot)$ for all $\beta \in (0, \infty)$. A typical example is the constant source $f \equiv 1$, for which the stability condition \eqref{eq: Robin stable iff condition} holds for every $\beta > 0$, which matches known results established in \cite{Bucur} and \cite{AN}.


However, it is also possible for $A_0>2\sqrt{A_1A_2}$ to occur. For example, consider the family of radially decreasing functions $f_{\delta}(x)=\tfrac{1}{\delta^n}e^{-\pi\tfrac{|x|^2}{\delta^2}}$. As $\delta\to 0$, $\bar{f_{\delta}}(R)$ tends to $1/|B_{R}|$, but both $f_{\delta}(R)$ and $(f_{\delta})_{r}(R)$ tend to 0. Hence $A_0$ tends to $\tfrac{n-1}{n}\tfrac{1}{|B_R|^2}$, while $A_1A_2$ tends to $0$. Consequently, for such $f_\delta$ with sufficiently small $\delta>0$, $A_0>2\sqrt{A_{1}A_{2}}>0$. Then, according to \eqref{eq:rearrange}, there exist two positive constants $0<\beta_1<\beta_2$, where $\beta_1, \beta_2$ only depend on $f_{\delta}$, $n$ and $R$, such that $B_{R}$ is stable for $J_{\beta}^{f}(\cdot)$ when $\beta\in (0,\beta_{1}]\cup [\beta_{2},\infty)$, while $B_{R}$ is unstable when $\beta\in (\beta_{1},\beta_{2})$.

This phenomenon is rather counter-intuitive not only from the physical point of view, but also somehow violates the mathematical expectation. When $\beta$ is large, it is expected that the problem is more close to the Dirichlet case, and according to Proposition \ref{thm:global}, one might anticipate that if $\beta$ is larger than a threshold, then $B_R$ is stable. See also the Remark \ref{rmk-iff}. However, when the inequality
$A_0 > 2\sqrt{A_1 A_2}>0$
holds, the equivalent stability condition
$A_0 \leq \beta A_1 + \frac{1}{\beta}A_2$
fails precisely in an intermediate range of $\beta$, while being satisfied in both cases when the parameter is small or large. 

Last, we note that if $ \beta R\ge (n-1)$, then \eqref{eq: Robin stable iff condition} is always satisfied for any radially decreasing $f$. However, when $0<\beta R<n-1$, we can find a radially decreasing function $f$ such that \eqref{eq: Robin stable iff condition} is not satisfied, and thus $B_R$ is not stable to $J^f_\beta(\cdot)$. Such a function $f$ can be chosen to be like $\tfrac{1}{\delta^n}e^{-\pi\tfrac{|x|^2}{\delta^2}}$ for sufficiently small $\delta>0$ depending on $\beta$ and $R$, and the reason is similar to the previous discussion for the possibility of $A_0>2\sqrt{A_1A_2}$.

We put some of the above comments into the following corollary, summarizing some interesting phenomena:
\begin{corollary}
\label{cor1}
Let $f>0$ be a smooth radially decreasing function about the origin, and we use the same notations as before. Then, we obtain the following consequences.
\begin{itemize}
\item[(1)] When $f(R)\ge \frac{n-1}{n}\bar{f}(R)$, $B_{R}$ is stable to $J_{\beta}^{f}(\cdot)$ for all $\beta>0$.
\item[(2)] For any given $R>0$, $f$ satisfies either $A_0\le 2\sqrt{A_1A_2}$ or $A_0>2\sqrt{A_1A_2}$, where $A_0, A_1$ and $A_2$ are defined right after \eqref{eq:rearrange}. In the first scenario, $B_R$ is stable to $J^f_\beta(\cdot)$ for all $\beta>0$. In the second scenario, if additionally $f_r(R)<0$, that is $A_2>0$, then there exist two constants $0<\beta_{1}<\beta_{2}$ depending on $f$, $n$ and $R$, such that $B_{R}$ is stable to $J_{\beta}^{f}(\cdot)$ when $\beta\in(0,\beta_{1}]\cup [\beta_{2},+\infty)$, but is unstable to $J^f_\beta(\cdot)$ when $\beta\in(\beta_{1},\beta_{2})$.  
\item[(3)] Fix any $R>0$. If $\beta\ge \tfrac{n-1}{R}$, then for any smooth radially decreasing $f$, $B_R$ is always stable to $J^f_\beta(\cdot)$. However, if $0<\beta<\tfrac{n-1}{R}$, then there always exists a smooth radially decreasing function $f$ such that $B_R$ is not a stable shape to $J^f_\beta(\cdot)$ for this given $\beta$.
\item[(4)] Fix any $\beta>0$. If $R\ge \tfrac{n-1}{\beta}$, then for any smooth radially decreasing $f$, $B_R$ is always stable to $J^f_\beta(\cdot)$. However, if $0<R< \tfrac{n-1}{\beta}$, then there always exists a smooth radially decreasing function $f$ such that $B_R$ is not a stable shape to $J^f_\beta(\cdot)$ for this given $R$.
\end{itemize}
\end{corollary}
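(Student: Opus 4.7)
\textbf{Proof proposal for Corollary~\ref{cor1}.} The plan is to reduce every assertion to Theorem~\ref{thm:Robin stable iff condition} in its rearranged form $A_0\le \beta A_1+\tfrac{1}{\beta}A_2$, using only sign-tracking of the quantities $A_0,A_1,A_2$ together with an elementary AM--GM argument. First I would record the two basic observations available under the standing assumption that $f$ is smooth and radially decreasing: averaging in radial coordinates gives $\bar f(R)=\tfrac{n}{R^n}\int_0^R f(r)r^{n-1}\,dr\ge f(R)$, and radial monotonicity yields $f_r(R)\le 0$. These immediately give $A_1\ge 0$ and $A_2\ge 0$.

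For (1), I would check signs directly: the hypothesis $f(R)\ge \tfrac{n-1}{n}\bar f(R)$ makes the first factor in the product defining $A_0$ nonnegative and the factor $f(R)-\bar f(R)$ nonpositive, so their product is nonpositive; the remaining term $\tfrac{R}{n}f_r(R)\bar f(R)$ is also nonpositive. Hence $A_0\le 0\le \beta A_1+\tfrac{1}{\beta}A_2$ for every $\beta>0$. For (2), the dichotomy is tautological, and in the regime $A_0\le 2\sqrt{A_1A_2}$ the AM--GM inequality $\beta A_1+\tfrac{1}{\beta}A_2\ge 2\sqrt{A_1A_2}$ gives stability for every $\beta>0$. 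In the regime $A_0>2\sqrt{A_1A_2}$ with $A_2>0$, I would first argue that $A_1>0$ as well: if $A_1=0$ then $f(R)=\bar f(R)$, which together with radial monotonicity forces $f$ to be constant on $B_R$, and smoothness then forces $f_r(R)=0$, contradicting $A_2>0$. The convex function $g(\beta)=\beta A_1+\tfrac{1}{\beta}A_2$ then attains its global minimum $2\sqrt{A_1A_2}<A_0$ at $\beta^\ast=\sqrt{A_2/A_1}$ and diverges at $0^+$ and $\infty$, so $g(\beta)=A_0$ has exactly two positive roots $\beta_1<\beta_2$; the sets $\{g\ge A_0\}=(0,\beta_1]\cup[\beta_2,\infty)$ and $\{g<A_0\}=(\beta_1,\beta_2)$ give exactly the claimed stability and instability intervals.

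For (3), if $\beta R\ge n-1$ then $\tfrac{n-1-R\beta}{n}\bar f(R)\le 0\le f(R)$, so the first factor $f(R)-\tfrac{n-1-R\beta}{n}\bar f(R)$ appearing in \eqref{eq: Robin stable iff condition} is nonnegative; paired with the nonpositive second factor $f(R)-\bar f(R)$ and with the nonpositive gradient term $\tfrac{1+\beta R}{n\beta}f_r(R)\bar f(R)$, this yields stability. For the converse in (3), and symmetrically for (4), I would use the Gaussian family $f_\delta(x)=\delta^{-n}e^{-\pi|x|^2/\delta^2}$ flagged in the discussion preceding the corollary: as $\delta\to 0^+$ one has $\bar f_\delta(R)\to 1/|B_R|$ while $f_\delta(R)$ and $(f_\delta)_r(R)$ decay superpolynomially to $0$, so the left-hand side of \eqref{eq: Robin stable iff condition} converges to $\tfrac{n-1-R\beta}{n|B_R|^2}$, which is strictly positive precisely when $\beta R<n-1$. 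Choosing $\delta$ sufficiently small (depending on $\beta$ and $R$, respectively on $R$ and $\beta$) then produces the required smooth radially decreasing counterexample.

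The main ``obstacle'' is really only careful sign bookkeeping, since Theorem~\ref{thm:Robin stable iff condition} does all the heavy lifting. The one slightly delicate step I expect to need extra care is the implication $A_2>0\Rightarrow A_1>0$ used in part (2), which relies on the rigidity fact that for smooth radially monotone $f$ the equality $\bar f(R)=f(R)$ forces $f$ to be constant on $B_R$ and hence $f_r(R)=0$.
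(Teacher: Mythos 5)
Your proposal is correct and follows essentially the same route as the paper, which proves the corollary through the discussion surrounding \eqref{eq:rearrange}: sign bookkeeping of $A_0,A_1,A_2$ for (1) and the first half of (3)--(4), AM--GM applied to $\beta A_1+\tfrac{1}{\beta}A_2$ for (2), and the Gaussian family $f_\delta$ with $\bar f_\delta(R)\to 1/|B_R|$ and $f_\delta(R),(f_\delta)_r(R)\to 0$ for the instability claims. Your explicit justification of the implication $A_2>0\Rightarrow A_1>0$ (needed so that $g(\beta)=\beta A_1+\tfrac{1}{\beta}A_2$ diverges as $\beta\to\infty$ and the equation $g(\beta)=A_0$ has exactly two roots) is a detail the paper leaves implicit, and it is handled correctly.
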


The statements (2)-(4) in Corollary \ref{cor1} can be viewed as stability breaking results, which depend not only on the shape of the radially decreasing heat source, but also on the volume constraints, the Robin coefficient and the dimension. 

In summary, for the Robin coefficient $\beta \in (0,+\infty)$, we have demonstrated that the inequality $\int_{\Omega} fu_\Omega^{f,\beta}\le \int_{\Omega^\sharp}fu_{\Omega^\sharp}^{f,\beta} $, or equivalently $J_\beta^f(\Omega^\sharp)\le J_\beta^f(\Omega)$, is in general not true for arbitrary radially decreasing heat source $f$. Nevertheless, applying Talenti's comparison result, when $\beta=+\infty$, this inequality is indeed valid for any radially decreasing positive function $f$. Also, as an application of \cite{AN}, we prove some modified inequalities regarding the averaged temperature, which hold for any $\beta \in (0,+\infty)$ and arbitrary radially decreasing positive function $f$, and we also obtain some optimization results on a thin insulation problem for radial heat sources, which appropriately extends a result proved in \cite{DNST} in two dimensions. These global results are proved in section 4.

Finally, as a byproduct of Theorem \ref{thm:Robin stable iff condition}, we show that Talenti type pointwise comparison for Robin problems is not true in general even if $f>0$ is radially decreasing and the domain under consideration is a small translation of a ball centered at the maximal point of such $f$. This byproduct via stability argument further indicates that the validity of Robin pointwise comparison is not only sensitive to dimension as pointed out in \cite{AN}, but may also depend on the size of $\beta$ and the volume of the domain. For details, see section 4.

\vskip 0.3cm
\textbf{The techniques.} To study local optimality of domains, it is natural to use the shape derivative method. The shape derivative method has a long history and goes back to Hadamard \cite{H}, and has been studied by many authors thereafter. One can find in the comprehensive monographs \cite{Henrot2} and \cite{Henrot} many related references, as well as shape derivative formulas for some classical shape functionals such as the first eigenvalue of Dirichlet Laplacian, the second eigenvalue of Neumann Laplacian and so on, which are all derived by considering the deformation map $F_t(x)=x+t\eta(x)$ for sufficiently regular vector field $\eta$.
	
Inspired by \cite{DLW} and \cite{HLL1}, we instead use smooth geometric flows which preserve the volume of the domain, to derive the stability condition of the shape functional $J^f_\beta(\cdot)$. The use of the flow map does not lose generality if one considers smooth variations, and the advantage is that the evolution equation formulas stated in the flow language can be handled in a neater way via formulas from differential geometry, especially when dealing with boundary integrals. Recently, the flow methods are also applied to obtain new global monotonicity properties on shape functionals, see \cite{He2025} and \cite{H2025}.  

Here we briefly explain how we establish \eqref{eq: Robin stable iff condition} as a necessary and sufficient condition for the stability of $B_R$ to energy functional $J_\beta^f(\cdot)$. First, to prove that the condition \eqref{eq: Robin stable iff condition} is sufficient for the stability of $B_R$, a crucial step is to prove a  sharp inequality relating the Robin state function, its shape derivative and the velocity of the flow map. This can be achieved by using estimates of the second Steklov eigenvalue of the Laplacian. Such argument is indeed natural, since the shape derivative of the state function is a valid trial function in the variational characterization of the second Steklov eigenvalue. Combining some delicate analysis, we are able to derive the sufficiency of \eqref{eq: Robin stable iff condition}. Second, we observe that when the flow is chosen to be a translation, the equality case happens in the Steklov eigenvalue estimate, and this also implies the necessity of the condition \eqref{eq: Robin stable iff condition}.

\vskip 0.3cm

\textbf{Outline of the paper.} In section 2, we state some formulas in differential geometry used in later computation of shape derivatives via the flow method. In section 3, we derive the first and second shape derivatives of $J_{\beta}^{f}(\cdot)$ along volume-preserving flows, and we also prove Theorem \ref{thm:Robin stable iff condition}. In section 4, we prove some global optimization results with respect to both Dirichlet and Robin problems, and we also discuss the counterexamples on Robin Talenti comparison results. In section 5, we prove the formulas stated in section 2 as an appendix.

	\section{Preliminaries}
	In order to implement second variation on our shape functionals, we first review some basic formulas in differential geometry.
	
	Let $\eta$ be a smooth vector field defined in $\mathbb{R}^n$, and let $M$ be a smooth orientable hypersurface in $\mathbb{R}^n$. Then the tangential gradient of $\eta$ along $M$, denoted by $\nabla ^M \eta$, is defined as the linear map or $n \times n$ matrix defined on $M$, such that for any point $p \in M$ and any vector $V$, the following identity holds:
	\begin{align*}
		\nabla^M \eta V=\nabla \eta V^T,
	\end{align*}where $V^T$ is the projection of $V$ on $T_pM$, the tangent space of $M$ at $p$. Hence
	\begin{align*}
		\nabla^M \eta =\nabla \eta (I-\nu \otimes \nu),
	\end{align*}where $I$ is the identity map, $\nu$ is the unit normal to $T_pM$ with respect to the orientation of $M$, and $a\otimes b$ is the linear map defined as $(a\otimes b) V=(b\cdot V)a$. 
	
	If $(x^1,\cdots, x^{n-1})$ is a local coordinate system of $M$, then the tangential gradient of $\eta$ along $M$ can also be written in the following form
	\begin{align}
		\label{tangentialgradient}
		\nabla^M \eta =g^{\alpha\beta}\partial_\alpha \eta \otimes \partial_\beta F,
	\end{align}where $F$ is the position vector for points on $M$, $\partial_{\alpha}F=\frac{\partial F}{\partial x^\alpha}$, $g_{\alpha \beta}=<\partial_\alpha F, \partial_\beta F>$, $g^{\alpha\beta}$ is the inverse of the metric tensor $g_{\alpha \beta}$ and $\partial_\alpha \eta=(\nabla \eta)\partial_\alpha F$. In the above, the notation of repeated indices means summations, and $\alpha, \beta$ are labeled from $1$ to $n-1$.
	
	The tangential divergence of $\eta$ on $M$ is defined as the trace of $\nabla^M \eta$, that is,
	\begin{align}
		\label{tangentialdivergence}
		\div_{M}\eta=\div \eta -<\nabla \eta \nu, \nu> =g^{\alpha \beta}\partial_\alpha\eta \cdot \partial_\beta F,
	\end{align}where both $<\cdot,\cdot>$ and $\cdot$ denote the inner product in $\mathbb{R}^n$. 
	
	We also adopt the convention that given a function $f$ defined in $\mathbb{R}^n$, $\nabla_\alpha\nabla_\beta f$ denotes $\nabla_M^2 f(\partial_\alpha F,\partial_\beta F)$, where $\nabla_M^2 f$ is the Hessian of $f$ on $M$, and $\partial_\alpha\partial_\beta f$ denotes usual derivatives of $f$ first along $\partial_\beta F$ and then along $\partial_\alpha F$. One can check that the following identity holds
	\begin{align*}
		<\nabla^M(\nabla^M f)\partial_\alpha F, \partial_\beta F>=\nabla_M^2 f(\partial_\alpha F, \partial_\beta F)=\partial_\alpha \partial_\beta f-\Gamma_{\alpha\beta}^\gamma\partial_\gamma f,
	\end{align*}where $\Gamma_{\alpha \beta}^\gamma$ denotes Christoffel symbols and $\nabla^M f=\nabla f-\frac{\partial f}{\partial \nu}\nu$ on $M$, which can be extended in a neighborhood of $M$ by regarding $\nu$ as the gradient of the signed distance function. Hence we can regard the Hessian $\nabla_M^2 f$ as an $n\times n$ matrix given by $\nabla^M (\nabla^M f)$. Note that 
	\begin{align}
		\label{Hessian}
		\nabla^2 f=\nabla_M^2 f+\nabla^2 f \nu \otimes \nu+\nu \otimes \nabla^M \frac{\partial f}{\partial \nu}+\frac{\partial f}{\partial \nu}\nabla^M \nu,
	\end{align}
	where $\nabla ^M \nu$ is understood as $\nabla^M (\nabla d)$, where $d$ is the standard signed distance function, and thus $\frac{\partial f}{\partial \nu}$ is defined in a neighborhood of $M$ as $\nabla f\cdot \nabla d$. Hence we have the following two equivalent definitions of Laplacian-Beltrami operator of $f$ on $M$, namely
	\begin{align*}
		\Delta_M f:=tr(\nabla_M^2 f)= g^{\alpha \beta}\nabla_\alpha\nabla_\beta f=\Delta f-\frac{\partial f}{\partial \nu}H-f_{\nu\nu},
	\end{align*}where $H$ is the mean curvature of $M$ and $f_{\nu\nu}=tr(\nabla^2 f \nu \otimes \nu)$.
		
	The following proposition is proved in \cite[Section 2]{HLL1}, which will be used later in the computations of the first and second variations of $J_{\beta}^{f}(\cdot)$.
	\begin{proposition}
		\label{geometricevolution}
		Let $F_t(x):=F(t,x)$ be the flow map generated by a smooth vector field $\eta$, $\eta(t)=\eta\circ F_t$, $M_t=F_t(M)$, and $\sigma_t$ be the volume element of $M_t$. For any $p\in M$, we let $\nu(t)(p)$ be the unit normal to $M_t$ at $F_t(p)$, and $h(t)(p)$ be the second fundamental form on $M_t$ at $F_t(p)$.  Then, we have 
		\begin{align}
			\label{evolutionofarea2}
			\frac{d}{dt}d\sigma_t=(\div_{M_t} \eta)d\sigma_t,
		\end{align}
		\begin{align}
			\label{evolutionofnormalspeed}
			\frac{d}{dt}\left(\eta(t)\cdot \nu(t)\right)=(\eta(t)\cdot\nu(t)) (\div \eta -\div_{M_t}\eta)\circ F_t,
		\end{align}
		and
		\begin{align}
			\label{hijevolution}
			h_{\alpha\beta}'(t)(p)=-<\nabla_\alpha\nabla_\beta\eta(F_t(p)),\nu(t)(p)>, 
		\end{align}where $h_{\alpha\beta}(t)(p)$ and $\nabla_\alpha\nabla_\beta\eta(F_t(p))$ are the $\alpha,\beta$-components of $h(t)$ and the Hessian of $\eta$ on $M_t$ at $F_t(p)$, respectively under the original local coordinates.
		
		If $M$ is an $(n-1)$-sphere of radius $R$, then we also have
		\begin{align}
			\label{evoonsphere}
			\frac{d}{dt}\Big|_{t=0}H(t)=-\Delta_M(\eta\cdot \nu)-\frac{n-1}{R^2}\eta\cdot\nu,  
		\end{align}
        where $H(t)$ is the mean curvature on $M_t$.
	\end{proposition}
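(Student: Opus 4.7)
The plan is to prove all four identities by working in a local coordinate chart $(x^1,\ldots,x^{n-1})$ on $M$, pushing it forward to $M_t$ via $F_t$, and differentiating the resulting expressions in $t$. The core tools are the ODE $\partial_t F_t = \eta(t)$ with the chain rule $\partial_\alpha\eta(t) = (\nabla\eta\circ F_t)\partial_\alpha F_t$, the Gauss decomposition $\partial_\alpha\partial_\beta F_t = \Gamma^\gamma_{\alpha\beta}(t)\partial_\gamma F_t - h_{\alpha\beta}(t)\nu(t)$, and a recurring auxiliary identity obtained by differentiating the orthogonality $\nu(t)\cdot\partial_\alpha F_t=0$, namely $\nu'(t)\cdot\partial_\alpha F_t = -\nu(t)\cdot\partial_\alpha\eta(t)$; combined with $\nu\cdot\nu'=0$, this determines the tangential vector $\nu'(t)$ explicitly.

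For \eqref{evolutionofarea2}, I would differentiate the metric tensor $g_{\alpha\beta}(t) = \langle\partial_\alpha F_t,\partial_\beta F_t\rangle$, apply Jacobi's formula to $\sqrt{\det g(t)}$, and recognize the contraction $g^{\alpha\beta}\langle\partial_\alpha\eta(t),\partial_\beta F_t\rangle$ as $\div_{M_t}\eta$ via \eqref{tangentialdivergence}. For \eqref{evolutionofnormalspeed}, I would differentiate $\eta(t)\cdot\nu(t)$ directly, substitute the formula for $\nu'(t)$, and decompose $\eta = \eta^T+(\eta\cdot\nu)\nu$ on $M_t$; the $\eta^T$-contributions cancel and only $(\eta\cdot\nu)(\nu\cdot\nabla\eta\cdot\nu)$ survives, which equals the claimed right-hand side by \eqref{tangentialdivergence}.

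For \eqref{hijevolution}, I adopt the sign convention $h_{\alpha\beta}(t) = -\langle\nu(t),\partial_\alpha\partial_\beta F_t\rangle$ (so that spheres have $H>0$ with outward normal) and differentiate in $t$. The $\nu'(t)$-term, after substituting the Gauss decomposition for $\partial_\alpha\partial_\beta F_t$, reduces to $\Gamma^\gamma_{\alpha\beta}\,\nu\cdot\partial_\gamma\eta(t)$, since the normal component drops because $\nu'\perp\nu$. The $\partial_\alpha\partial_\beta\eta(t)$-term, rewritten componentwise via $\nabla_\alpha\nabla_\beta\eta = \partial_\alpha\partial_\beta\eta - \Gamma^\gamma_{\alpha\beta}\partial_\gamma\eta$, produces $-\Gamma^\gamma_{\alpha\beta}\,\nu\cdot\partial_\gamma\eta(t) - \langle\nu,\nabla_\alpha\nabla_\beta\eta\rangle$, and the Christoffel pieces cancel exactly, leaving \eqref{hijevolution}.

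For \eqref{evoonsphere}, I specialize to $M=\partial B_R$ at $t=0$, where $h_{\alpha\beta}=\tfrac{1}{R}g_{\alpha\beta}$ and $\nu=x/R$. Writing $H(t) = g^{\alpha\beta}(t)h_{\alpha\beta}(t)$ and using $(g^{\alpha\beta})'(0) = -g^{\alpha\gamma}g^{\beta\delta}g'_{\gamma\delta}(0)$ together with \eqref{hijevolution}, I would expand $H'(0)$ into terms involving $\nabla^M\eta$ and the full Hessian of $\eta$. The main obstacle is the algebraic repackaging of these terms into the target $-\Delta_M(\eta\cdot\nu) - \frac{n-1}{R^2}\eta\cdot\nu$: I would decompose $\eta=\eta^T+(\eta\cdot\nu)\nu$, use the sphere identity $\nabla^M\nu = \frac{1}{R}(I-\nu\otimes\nu)$, and invoke \eqref{Hessian} to trade the Euclidean Hessian for $\nabla_M^2\eta$ plus curvature-type corrections. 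The tangential components $\eta^T$ should drop out entirely, since a purely tangential flow leaves the image of the sphere and hence $H$ unchanged; only the normal-component terms survive and, after using the constant sectional curvature $1/R^2$ of the sphere to simplify the curvature correction, they assemble into the claimed form involving $\eta\cdot\nu$ alone.
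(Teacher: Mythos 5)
Your proposal is correct and follows essentially the same route as the paper's appendix proof: differentiating $g_{\alpha\beta}$ plus Jacobi's formula for \eqref{evolutionofarea2}, the identity $\nu'\cdot\partial_\alpha F=-\nu\cdot\partial_\alpha\eta$ with the tangential/normal splitting of $\eta$ for \eqref{evolutionofnormalspeed}, the Gauss decomposition and cancellation of Christoffel terms for \eqref{hijevolution} (your convention $h_{\alpha\beta}=-\langle\nu,\partial_\alpha\partial_\beta F\rangle$ is equivalent to the paper's $h_{\alpha\beta}=\partial_\alpha\nu\cdot\partial_\beta F$), and the trace computation with $h_{\alpha\beta}=g_{\alpha\beta}/R$ for \eqref{evoonsphere}. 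The only cosmetic difference is that for the last identity the paper verifies the vanishing of the $\eta^T$-contribution by the explicit computation $\langle\Delta_M\eta^T,\nu\rangle=-\tfrac{2}{R}\div_M\eta^T$ rather than by your (valid) geometric-invariance argument, and the needed identity is $\langle\Delta_M(\zeta\nu),\nu\rangle=\Delta_M\zeta-\tfrac{n-1}{R^2}\zeta$ rather than \eqref{Hessian}.
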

	For the convenience of readers, we will prove the above proposition in the appendix. The proof contains more details than that in \cite{HLL1}.
	
	The next proposition can be either deduced from the evolution equation of $det(\nabla F_t(x))$, or from the following well-known formula
	\begin{align}
    \label{wkformula}
		\frac{d}{dt}\int_{F_t(\Omega)}f(t,x)\, dx=\int_{F_t(\Omega)}f_t(t,x)\, dx+\int_{\partial F_t(\Omega)}f(t,x)\eta \cdot \nu\, d\sigma_t,
	\end{align}
where $\nu$ is the unit outer normal to the boundary. 

\begin{proposition}
 \label{vp}
		If $F(t,\cdot)$ is the flow map generated by a smooth vector field $\eta$ which preserves the volume of $\Omega$ along the flow, then for any $t$, we have
		\begin{align*}
			\int_{F_t(\Omega)}\div \eta\, dx=0\quad \mbox{and} \quad  \int_{F_t(\Omega)}\div\left( (\div\eta)\eta\right)\, dx =0.
		\end{align*}
	\end{proposition}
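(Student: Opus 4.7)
The plan is to deduce both identities directly from formula \eqref{wkformula} combined with the volume preservation hypothesis $|F_t(\Omega)| = |\Omega|$ for all $t$, and the divergence theorem. No deep computation is needed; the only subtlety is to differentiate \emph{twice} in $t$ rather than once.

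For the first identity, I would apply \eqref{wkformula} with $f\equiv 1$. Since $f$ is time-independent, the interior term $\int_{F_t(\Omega)}f_t\,dx$ vanishes, and the left-hand side $\frac{d}{dt}|F_t(\Omega)|$ is zero by volume preservation. This leaves
\begin{equation*}
0 = \int_{\partial F_t(\Omega)} \eta\cdot\nu\,d\sigma_t = \int_{F_t(\Omega)} \div\eta\,dx,
\end{equation*}
the second equality being the classical divergence theorem applied on $F_t(\Omega)$.

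For the second identity, the key observation is that the first identity already gives $\int_{F_t(\Omega)}\div\eta\,dx\equiv 0$ as a function of $t$, so its $t$-derivative also vanishes. Applying \eqref{wkformula} with $f=\div\eta$ (again time-independent, so $f_t=0$) produces
\begin{equation*}
0 = \frac{d}{dt}\int_{F_t(\Omega)}\div\eta\,dx = \int_{\partial F_t(\Omega)} (\div\eta)(\eta\cdot\nu)\,d\sigma_t.
\end{equation*}
One more application of the divergence theorem rewrites the boundary integral as $\int_{F_t(\Omega)}\div\bigl((\div\eta)\eta\bigr)\,dx$, which yields the desired identity.

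As noted in the statement, the same conclusions can be reached by working intrinsically with $J(t,x):=\det(\nabla F_t(x))$, which satisfies $\partial_t J = (\div\eta)\circ F_t\cdot J$. Changing variables, $|F_t(\Omega)|=\int_\Omega J\,dx$, and differentiating once or twice in $t$ followed by changing variables back gives the two identities in turn. I do not anticipate any substantive obstacle here: the argument is purely a bookkeeping exercise in the Reynolds transport formula, and the only thing to be careful about is recognizing that $\div\eta$ does not depend on $t$ (it is an Eulerian quantity defined on $\mathbb{R}^n$), so the interior term vanishes in both applications of \eqref{wkformula}.
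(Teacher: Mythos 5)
Your proposal is correct and follows precisely the route the paper itself indicates (the paper states the proposition without a written proof, pointing to the Reynolds transport formula \eqref{wkformula} or, alternatively, the evolution of $\det(\nabla F_t)$): the first identity from volume preservation plus the divergence theorem, the second from differentiating the first in $t$ and applying \eqref{wkformula} with $f=\div\eta$. No gaps; the key observation that $\div\eta$ is time-independent, so the interior term in \eqref{wkformula} vanishes, is exactly the point.
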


	
	\section{Proof of Theorem \ref{thm:Robin stable iff condition}} 
	We first stipulate some notations. Let $F(t,\cdot)$ be the flow map generated by a smooth vector field $\eta$ preserving volume, and let $u(t)$ be the solution to \eqref{eq:Robin equations} over the domain $F_t(\Omega)$. That is, $u(t)$ is the unique function over $F_t(\Omega)$ such that 
	$$J_{\beta}^{f}(F_t(\Omega))=\frac{1}{2}\int_{F_t(\Omega)}|\nabla u(t)|^{2} \, dx+\frac{\beta}{2}\int_{\partial F_t(\Omega)}(u(t))^{2} \, d\sigma_{t}-\int_{F_t(\Omega)}fu(t) \, dx.$$
	For small $|t|$, we also define $u'(t)$ as follows.
	\begin{align}
		u'(t)(F_t(x))=\frac{d}{dt} \left(u(t)(F_t(x))\right)-\nabla u(F_t(x))\cdot \eta (F_t(x)),
		\label{eq:derivative of solution u}
	\end{align}
	which is well-defined by standard regularity theory and implicit function theorem, see \cite{Henrot}.
	\subsection{First Variation and Stationarity Condition}
	\begin{proposition}
		Let $\Omega$ be a bounded smooth domain in $\mathbb{R}^n$, $f>0$ be a smooth function and
		$F_{t}(\cdot)$ be the map generated by a smooth vector field $\eta$ preserving the volume of $\Omega$ along the flow. Then for $|t|$ small, we have
		\begin{equation}
			\frac{d}{dt}J_{\beta}^{f}(F_{t}(\Omega))=\int_{\partial F_{t}(\Omega)}\big(-\beta^{2}u^{2}(t)+\frac{1}{2}|\nabla u(t)|^{2}+\frac{\beta}{2}u^{2}(t)H(t)-fu(t)\big)\zeta \, d\sigma_{t},
			\label{eq:Robin energy-first-derivative}
		\end{equation}
		where $H(t)$ is the mean curvature of $\partial F_{t}(\Omega)$, $\zeta=\eta\cdot\nu$
		and $\sigma_{t}$ is the volume element on the boundary $\partial F_{t}(\Omega)$. 
	\end{proposition}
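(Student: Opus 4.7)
The strategy is to differentiate the three pieces of $J_\beta^f(F_t(\Omega))$---the Dirichlet energy, the Robin boundary term, and the source term---separately, and then use the Robin PDE $-\Delta u(t)=f$ together with its boundary condition $u_\nu(t)=-\beta u(t)$ to eliminate every appearance of the material derivative $u'(t)$ defined in \eqref{eq:derivative of solution u}.

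First, I would apply the Reynolds transport formula \eqref{wkformula} to the two volume integrals. Since the Eulerian time derivative of $u(t)$ at a fixed spatial point is exactly $u'(t)$ by \eqref{eq:derivative of solution u}, differentiating produces the boundary contributions $\tfrac{1}{2}|\nabla u|^2\zeta$ and $-fu\zeta$ together with the interior term $\int_{F_t(\Omega)}\bigl(\nabla u\cdot\nabla u'-fu'\bigr)\,dx$. Integration by parts, combined with $-\Delta u=f$ in the bulk and $u_\nu=-\beta u$ on the boundary, collapses this interior term to the single boundary integral $-\beta\int_{\partial F_t(\Omega)}u u'\,d\sigma_t$, removing all interior occurrences of $u'$.

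Second, for the Robin boundary term I would use the surface transport identity obtained from \eqref{evolutionofarea2},
\[
\frac{d}{dt}\int_{\partial F_t(\Omega)} u^2\,d\sigma_t=\int_{\partial F_t(\Omega)}\bigl[2uu'+2u\,\nabla u\cdot\eta+u^2\,\div_{\partial F_t(\Omega)}\eta\bigr]\,d\sigma_t.
\]
The key manipulation is the normal/tangential split $\eta=\zeta\nu+\eta^T$: invoking $\div_{\partial F_t(\Omega)}(\zeta\nu)=\zeta H$ and $\nabla u\cdot\eta=\zeta u_\nu+\nabla^{\partial F_t(\Omega)} u\cdot\eta^T$, the purely tangential pieces $2u\,\nabla^{\partial F_t(\Omega)}u\cdot\eta^T + u^2\,\div_{\partial F_t(\Omega)}\eta^T$ assemble into $\div_{\partial F_t(\Omega)}(u^2\eta^T)$, whose integral vanishes because $\partial F_t(\Omega)$ is a closed hypersurface. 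The Robin relation $u_\nu=-\beta u$ then turns the surviving $2u u_\nu\zeta$ into a multiple of $u^2\zeta$, while a remaining $2\beta uu'$ piece is precisely what is needed, after multiplication by $\tfrac{\beta}{2}$, to cancel the $-\beta uu'$ contribution from the first step.

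Combining these computations yields the stated formula. The main difficulty is the second step: one must correctly isolate which portion of $\nabla u\cdot\eta + u\,\div_{\partial F_t(\Omega)}\eta$ is genuinely tangential, and exploit the fact that $\partial F_t(\Omega)$ has no boundary so that the resulting tangential divergence integrates to zero. Once this cancellation is carried out and combined with the Robin identity, the interior $u'$ terms disappear and the remaining boundary integrand collapses to the one claimed.
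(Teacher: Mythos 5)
Your proposal is correct and follows essentially the same route as the paper: the same three-term splitting of $J_\beta^f$, the same use of the transport formulas \eqref{wkformula} and \eqref{evolutionofarea2}, and the same elimination of all $u'(t)$ terms via $-\Delta u=f$ and the Robin condition (you merely make explicit the tangential-divergence step $\div_{\partial F_t(\Omega)}(u^2\eta^T)$ that the paper performs silently when passing from $\nabla u\cdot\eta$ and $\div_{\partial F_t(\Omega)}\eta$ to $u_\nu\zeta$ and $H\zeta$). Note that your computation, like the paper's own proof and its Corollary \ref{cor:Robin-shape-stationary}, yields $-\beta^{2}u^{2}(t)$ rather than the $+\beta^{2}u^{2}(t)$ printed in \eqref{eq:Robin energy-first-derivative}, so the sign in the displayed statement is a typo.
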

	
	\begin{proof}
		
		By direct calculation, we have
		\begin{align*}
			I_{1}&=\frac{d}{dt}\left(\frac{1}{2}\int_{F_{t}(\Omega)}|\nabla u(t)|^{2} \, dx\right)\\
			&=\int_{F_{t}(\Omega)}\nabla u(t)\nabla u^{\prime}(t) \, dx+\int_{F_{t}(\Omega)}\frac{1}{2}|\nabla u(t)|^{2}\zeta \, d\sigma_{t}\\
			&=\int_{F_{t}(\Omega)}fu^{\prime}(t) \, dx+\int_{\partial F_{t}(\Omega)}\frac{\partial u(t)}{\partial\nu}u^{\prime}(t) \, d\sigma_{t}+\int_{\partial F_{t}(\Omega)}\frac{1}{2}|\nabla u(t)|^{2}\zeta \, d\sigma_{t}.
		\end{align*}
		\begin{align*}
			I_{2}&=\frac{d}{dt}\left(\frac{\beta}{2}\int_{\partial F_{t}(\Omega)}u^{2}(t) \, d\sigma_{t}\right)\\
			&=\frac{\beta}{2}\int_{\partial F_{t}(\Omega)}\left(2u(t)(u^{\prime}(t)+\nabla u(t)\cdot\eta)+u^{2}(t) \div_{\partial F_{t}(\Omega)}\eta\right) \, d\sigma_{t}\\
			&=\frac{\beta}{2}\int_{\partial F_{t}(\Omega)}\left(2u(t)u^{\prime}(t)+2u(t)\frac{\partial u(t)}{\partial\nu}\zeta+u^{2}(t)H(t)\zeta\right) \, d\sigma_{t}.
		\end{align*}
		\begin{equation*}
			I_{3}=\frac{d}{dt}\left(-\int_{F_{t}(\Omega)}fu(t) \, dx\right)=-\int_{F_{t}(\Omega)}fu^{\prime}(t) \, dx-\int_{\partial F_{t}(\Omega)}fu(t)\zeta \, d\sigma_{t}.
		\end{equation*}
		In the above, we have used the equation of $u(t)$, \eqref{evolutionofarea2} and \eqref{wkformula}.
		Summarizing the equations above, we have 
		\begin{align*}
			\frac{d}{dt}J_{\beta}^{f}(F_{t}(\Omega))&=I_{1}+I_{2}+I_{3}\\
			&=\int_{\partial F_{t}(\Omega)}\left(-\beta^{2}u^{2}(t)+\frac{1}{2}|\nabla u(t)|^{2}+\frac{\beta}{2}u^{2}(t)H(t)-fu(t)\right)\zeta \, d\sigma_{t}.
		\end{align*}
		
	\end{proof}
	
	In view of Proposition \ref{vp}, $\frac{d}{dt} \Big|_{t=0} J_{\beta}^{f}(F_{t}(\Omega))=0$ if and only if $-\beta^{2}u^{2}+\frac{1}{2}|\nabla u|^{2}+\frac{\beta}{2}u^{2}H-fu $
	on the boundary $\partial\Omega$ is a constant. We write this fact as the following corollary.
	
	\begin{corollary}
		\label{cor:Robin-shape-stationary}
		Let $\Omega$ be a bounded smooth domain in $\mathbb{R}^n$ and $f>0$ be a smooth function. Then, $\Omega$ is a stationary shape to $J_{\beta}^{f}(\cdot)$ if and only if there
		exists a solution to the following system of equations
		\begin{align}
			\label{eqs:Robin-stationary}
			\begin{cases}
				-\Delta u =f & \textit{in $\Omega$}\\
				\frac{\partial u}{\partial \nu}+\beta u=0 & \textit{on $\partial \Omega$} \\
				-\beta^{2}u^{2}+\frac{1}{2}|\nabla u|^{2}+\frac{\beta}{2}u^{2}H-fu=\textit{constant} & \textit{on $\partial \Omega$},
			\end{cases}
		\end{align}
		where $H$ is the mean curvature of $\partial\Omega$. 
	\end{corollary}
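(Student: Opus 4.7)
The plan is to derive this corollary as a direct consequence of the first variation formula \eqref{eq:Robin energy-first-derivative} combined with the volume-preservation identity of Proposition \ref{vp}, arguing the two implications separately. The whole argument is essentially the Lagrange-multiplier principle in shape calculus applied to the constrained problem of minimizing $J_\beta^f$ among domains of fixed volume.

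For sufficiency, I would start by assuming that $u$ solves the overdetermined system \eqref{eqs:Robin-stationary}, so that $u$ is the Robin state function $u_\Omega^{f,\beta}$ and
\[
G := -\beta^{2}u^{2}+\tfrac{1}{2}|\nabla u|^{2}+\tfrac{\beta}{2}u^{2}H-fu \equiv c \text{ on } \partial \Omega
\]
for some constant $c$. Substituting $G\equiv c$ into \eqref{eq:Robin energy-first-derivative} at $t=0$ gives
\[
\frac{d}{dt}\Big|_{t=0} J_\beta^f(F_t(\Omega)) = c \int_{\partial \Omega} \zeta \, d\sigma,
\]
and Proposition \ref{vp} tells us $\int_{\partial \Omega} \zeta \, d\sigma = \int_\Omega \div \eta \, dx = 0$ along any volume-preserving flow. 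Hence the first variation vanishes identically and $\Omega$ is stationary.

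For necessity, I would assume $\Omega$ is stationary. Then \eqref{eq:Robin energy-first-derivative} at $t=0$ yields $\int_{\partial \Omega} G \zeta \, d\sigma = 0$ for every $\zeta = \eta \cdot \nu|_{\partial \Omega}$ with $\eta$ a smooth volume-preserving vector field. To promote this to $G\equiv \text{const}$, I would invoke the standard Lagrange-multiplier principle: stationarity under the volume constraint is equivalent to the existence of a scalar $\lambda$ such that $\Omega$ is an unconstrained critical point of $J_\beta^f + \lambda \cdot |\cdot|$. Since the first variation of the volume functional along an arbitrary smooth flow is $\int_{\partial \Omega} \zeta \, d\sigma$, the unconstrained Euler--Lagrange relation becomes
\[
\int_{\partial \Omega} (G+\lambda) \zeta \, d\sigma = 0 \quad \text{for every } \zeta \in C^\infty(\partial \Omega),
\]
which forces $G \equiv -\lambda$ pointwise on $\partial \Omega$ by the fundamental lemma of the calculus of variations.

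The main technical point is justifying the Lagrange step, namely that the class of admissible normal speeds $\zeta = \eta \cdot \nu|_{\partial\Omega}$, with $\eta$ exactly volume-preserving, is rich enough in $\{\zeta \in C^\infty(\partial \Omega) : \int \zeta\, d\sigma =0\}$ to conclude $G\equiv \text{const}$. The standard way is: given any smooth mean-zero $\zeta$ on $\partial \Omega$, extend it to a tubular neighborhood as $\tilde\zeta$ and set $\eta_0 := \tilde\zeta \, \nabla d$, where $d$ is the signed distance to $\partial \Omega$. Then $\eta_0 \cdot \nu = \zeta$ on $\partial \Omega$ and $\int_\Omega \div \eta_0\, dx=0$, so the volume is preserved to first order; a compactly supported interior correction, divergence-free to higher order, upgrades $\eta_0$ to an exactly volume-preserving field without altering $\eta_0\cdot \nu$ on the boundary. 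With this construction (or equivalently the Lagrange-multiplier viewpoint) the necessity follows, and the proof is complete.
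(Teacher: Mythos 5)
Your argument is correct and follows essentially the same route as the paper, which simply reads off the corollary from the first variation formula \eqref{eq:Robin energy-first-derivative} together with Proposition \ref{vp}. The only difference is that you spell out the step the paper leaves implicit---that the admissible normal speeds $\zeta=\eta\cdot\nu$ with $\eta$ volume-preserving exhaust the mean-zero smooth functions on $\partial\Omega$, so that $\int_{\partial\Omega}G\zeta\,d\sigma=0$ for all such $\zeta$ forces $G$ to be constant---which is a worthwhile addition rather than a deviation.
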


\begin{remark}
    When $f \equiv 1$ and $\beta =\infty$, ball is the unique shape in order for the over-determined system \eqref{eqs:Robin-stationary} to admit a solution. This is due to Serrin's famous result \cite{Serrin}. For $\beta \in (0,\infty)$, it is interesting to study the rigidity of the system, and whether or not the ball is the unique solution to \eqref{eqs:Robin-stationary} remains open to us, even in the case when $f\equiv 1$.
\end{remark} 
	
As a consequence of Corollary \ref{cor:Robin-shape-stationary}, when $f$ is radial about the origin, any ball centered at the origin is stationary to $J_{\beta}^{f}(\cdot)$. Now assuming that $f$ is radial, we proceed the computation of the second variation. From now on, we write $f(x)=f(r)$, where $r=|x|$.
	
	\subsection{Second Variation and Stability Condition}
	
	\begin{proposition}
		\label{pro:Robin-second variation}
		Let $B_{R}\subset\mathbb{R}^{n}$ be a ball of radius of $R$ centered at the origin and $f(x)> 0$ be
		a nonnegative smooth radial function. Let $F_{t}$ be the flow map generated
		by a smooth vector field $\eta$ preserving the volume of $B_R$, and we denote $u^{\prime}(0)$ by $v$. Then we have
		\begin{equation}
			\begin{aligned}
				\label{eq:Robin energy-second-derivative}
				\frac{d^{2}}{dt^{2}}\Big|_{t=0}J_{\beta}^{f}(F_{t}(B_{R}))=&\frac{\beta}{2}\left(u(R)\right)^{2}\int_{\partial B_{R}}\left(-\Delta_{\partial B_R}\zeta-\frac{(n-1)}{R^{2}}\zeta\right)\zeta  \, d\sigma-\int_{\partial B_{R}}f_{r}u\zeta^{2} \, d\sigma\\
				&+\int_{\partial B_{R}}(v\zeta+u_{r}\zeta^{2})(u_{rr}-\beta^{2}u) \, d\sigma,
			\end{aligned}
		\end{equation}
		where $\zeta=\eta\cdot\nu$, and $\nu$ is the outer unit normal to $\partial B_{R}$.
	\end{proposition}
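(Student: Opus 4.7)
The plan is to differentiate the first variation
$$\frac{d}{dt}J_\beta^f(F_t(B_R)) = \int_{\partial F_t(B_R)} G(t)\,\zeta(t) \, d\sigma_t, \qquad G(t) := -\beta^2 u(t)^2 + \tfrac{1}{2}|\nabla u(t)|^2 + \tfrac{\beta}{2} u(t)^2 H(t) - f u(t),$$
once more, exploiting the radial symmetry at $t=0$. On $\partial B_R$ the radial state $u = u(0)$ satisfies $u_r = -\beta u$ (Robin) and $u_{rr} = -f(R) + \tfrac{n-1}{R}\beta u$ (the equation), while $H(0) \equiv \tfrac{n-1}{R}$; hence $G(0)$ is a constant $C$ on $\partial B_R$. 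The volume-preservation identity $\int_{\partial F_t(B_R)}\zeta(t)\,d\sigma_t = 0$ from Proposition \ref{vp} then lets me replace $G(t)$ by $G(t)-C$ in the integrand without changing the value of $\tfrac{d}{dt} J_\beta^f$.

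Next I will apply the boundary-motion rule
$$\frac{d}{dt}\int_{\partial F_t(B_R)} \phi(t) \, d\sigma_t = \int_{\partial F_t(B_R)}\bigl[\dot{\tilde\phi}\circ F_t^{-1} + \phi\,\div_{\partial F_t(B_R)}\eta\bigr] \, d\sigma_t,$$
a direct consequence of \eqref{evolutionofarea2}, to $\phi(t) = (G(t)-C)\zeta(t)$. Because $(G(0)-C)|_{\partial B_R} = 0$, the $\div$-term drops out, and inside $\dot{\tilde\phi}(0)$ the portion containing $\dot{\tilde\zeta}(0)$ also vanishes, leaving
$$\frac{d^2}{dt^2}\bigg|_{t=0} J_\beta^f(F_t(B_R)) = \int_{\partial B_R} \dot{\widetilde G}(0)\, \zeta \, d\sigma,$$
where $\dot{\widetilde G}(0) = G'(0) + \nabla G(0)\cdot\eta$ is the material derivative of $G$. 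To expand this I will use three ingredients: the Eulerian derivative $v := u'(0)$ satisfies $\Delta v = 0$ in $B_R$ together with the boundary identity $v_\nu + u_{rr}\zeta + \beta(v + u_r\zeta) = 0$, obtained by differentiating the Robin condition along the flow; the evolution formula \eqref{evoonsphere} yields $\dot{\widetilde H}(0) = -\Delta_{\partial B_R}\zeta - \tfrac{n-1}{R^2}\zeta$ for the mean-curvature factor; and the radial symmetry of $u$ and $f$ collapses tangential gradients on $\partial B_R$, producing $\nabla u = u_r\nu$, $\nabla f = f_r\nu$, $\nabla(|\nabla u|^2)\cdot\eta = 2u_r u_{rr}\zeta$, and $\nabla f\cdot\eta = f_r\zeta$.

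The main obstacle will be bookkeeping. After expanding $\dot{\widetilde G}(0)\zeta$ through the four summands of $G$, one is left with roughly a dozen scalar terms mixing $v$, $v_\nu$, $\zeta$, $\zeta^2$, and the boundary constants $u_r,\,u_{rr},\,\beta u$. The crucial cancellations rely on the boundary identity above to trade the $u_r v_\nu \zeta$ contribution (coming from $\tfrac{1}{2}|\nabla u|^2$) for $v\zeta$- and $\zeta^2$-pieces; after substituting $u_r = -\beta u$ and $u_{rr} = -f(R) + \tfrac{n-1}{R}\beta u$, the spurious terms of the form $\beta^3 u^2 \zeta^2$ and $\beta u u_{rr}\zeta^2$ cancel in pairs, and what remains regroups exactly into $\tfrac{\beta}{2}u^2(-\Delta_{\partial B_R}\zeta - \tfrac{n-1}{R^2}\zeta)\zeta - f_r u \zeta^2 + (v\zeta + u_r\zeta^2)(u_{rr}-\beta^2 u)$, which is the claimed formula \eqref{eq:Robin energy-second-derivative}.
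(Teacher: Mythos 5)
Your proposal is correct and follows essentially the same route as the paper's proof: differentiate the first variation once more, use the linearized boundary condition $v_\nu+\beta v+\beta u_r\zeta+u_{rr}\zeta=0$ for $v$, the mean-curvature evolution formula \eqref{evoonsphere}, the radial structure of $u$ and $f$ on $\partial B_R$, and the substitution $\beta H u=u_{rr}+f$, and the cancellations you describe do work out to \eqref{eq:Robin energy-second-derivative}. The only organizational difference is that you subtract the constant $C=G(0)$ before differentiating so that the $\div_{\partial B_R}\eta$ and $\dot{\tilde\zeta}$ contributions vanish pointwise, whereas the paper carries these terms through each of $K_1,\dots,K_4$ and discards them at the end using $\int_{\partial B_R}\zeta\, d\sigma=\int_{\partial B_R}\zeta\,\div\eta\, d\sigma=0$ from Proposition \ref{vp}; both devices rest on exactly the same facts, namely that $G(0)$ is constant on the sphere and that the flow preserves volume.
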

	
	\begin{proof}
In the following, we write $u=u(0)$ for simplification.	We first derive the equations for  
		$v$. Let $\phi$ be a test function, and since $\frac{\partial u}{\partial\nu}+\beta u=0$ on $\partial B_{R}$ and $u$ is radial, we have
		\begin{align*}
			0=&\frac{d}{dt} \Big|_{t=0}\int_{\partial F_{t}(B_{R})}\Big(\frac{\partial u(t)}{\partial\nu}+\beta u(t)\Big)\phi \, d\sigma_{t}\\
			=&\int_{\partial B_{R}} \Big(\frac{\partial v}{\partial \nu}+u_{rr}\zeta+\beta v+\beta \nabla u \cdot \eta\Big) \phi \, d\sigma+\int_{\partial B_{R}}\Big(\frac{\partial u}{\partial \nu}+\beta u\Big)\nabla \phi \cdot \eta \,  d\sigma\\
			&+\int_{\partial B_{R}} \Big(\frac{\partial u}{\partial \nu} +\beta u\Big)\phi \div_{\partial B_{R}} \eta \, d\sigma\\
			=&\int_{\partial B_{R}} \Big(\frac{\partial v}{\partial \nu}+u_{rr}\zeta+\beta v+\beta \nabla u \cdot \eta\Big) \phi \, d\sigma\\
			=&\int_{\partial B_{R}} \Big(\frac{\partial v}{\partial \nu}+u_{rr}\zeta+\beta v+\beta u_{r} \zeta \Big) \phi \, d\sigma.
		\end{align*} 
		Since $\phi$ is arbitrary, we have $\frac{\partial v}{\partial\nu}+\beta v+\beta u_{r}\zeta+u_{rr}\zeta=0 $ on the $\partial B_{R}$. Also, it is clear that $-\Delta v=0$ in $B_{R}$.
		Hence $v$ satisfies
		\begin{align}
			\label{eqs:v-equations}
			\begin{cases}
				-\Delta v =0 & \textit{in $B_{R}$} \\
				\frac{\partial v}{\partial\nu}+\beta v+\beta u_{r}\zeta+u_{rr}\zeta=0 & \textit{on $\partial B_{R}$}.
			\end{cases}
		\end{align}
$\eqref{eqs:v-equations}_2$ can also be derived via differentiating the boundary condition of $u(t)$ on $\partial F_t(B_R)$.

		Now, we calculate the second shape derivative. Taking the derivative of
		\eqref{eq:Robin energy-first-derivative} again, we can obtain the second derivative of $J_{\beta}(F_{t}(B_R))$ at $t=0$.
		First, by \eqref{evolutionofarea2}-\eqref{evolutionofnormalspeed} we have
\begin{align*}
    \frac{d}{dt}\left(\eta(F_t(p))\cdot \nu(t)(p)\, d\sigma_t\right)=\left(\eta(F_t(p))\cdot \nu(t)(p)\right) \div \eta (F_t(p)) \, d\sigma_t.
\end{align*}
       Hence 
		\begin{align*}
			K_{1}:=&\frac{d}{dt}\Big|_{t=0}\int_{\partial F_{t}(B_{R})}-\beta^{2}u^{2}(t)\zeta \, d\sigma_{t}\\
			=&-\beta^{2}\int_{\partial B_{R}}\left(2u(v+<\nabla u,\eta>)\zeta+u^{2}\zeta \div\eta\right) \, d\sigma\\
			=&\int_{\partial B_{R}}\left(-2\beta^{2}uv\zeta-2\beta^{2}u\zeta<\nabla u,\eta>-\beta^{2}u^{2}\zeta \div\eta\right) \, d\sigma \\
			=&\int_{\partial B_{R}}\left(-2\beta^{2}uv\zeta-2\beta^{2}uu_{r}\zeta^{2}-\beta^{2}u^{2}\zeta \div\eta\right) \, d\sigma \\
			=&\int_{\partial B_{R}}\left(-2\beta^{2}u(v\zeta+u_{r}\zeta^{2})-\beta^{2}u^{2}\zeta \div\eta\right) \, d\sigma.
		\end{align*}
		In view of \eqref{eqs:v-equations}, similarly we have
		\begin{align*}	K_{2}:=&\frac{d}{dt}\Big|_{t=0}\int_{\partial F_{t}(B_{R})}\frac{1}{2}|\nabla u(t)|^{2}\zeta \, d\sigma_{t}\\
			=&\int_{\partial B_{R}} \left(u_{\nu}v_{\nu}\zeta+u_{\nu}u_{\nu\nu}\zeta^{2}+\frac{1}{2}|\nabla u|^{2}\zeta \div\eta \right) \, d\sigma \\
			=&\int_{\partial B_{R}} \left(u_{r}\zeta(v_{r}+u_{rr}\zeta)+\frac{1}{2}|\nabla u|^{2}\zeta \div\eta\right) \, d\sigma \\
			=&\int_{\partial B_{R}} \left(-u_{r}\zeta(\beta v+\beta u_{r}\zeta)+\frac{1}{2}|\nabla u|^{2}\zeta \div\eta \right) \, d\sigma \\
			=&\int_{\partial B_{R}} \left(\beta^{2}uv\zeta+\beta^{2}u_{r}u\zeta^{2}+\frac{1}{2}|\nabla u|^{2}\zeta \div\eta \right) \, d\sigma \\
			=&\int_{\partial B_{R}} \left(\beta^{2}u(v\zeta+u_{r}\zeta^{2})+\frac{1}{2}|\nabla u|^{2}\zeta \div\eta \right) \, d\sigma,
		\end{align*}
		where we used that
		\begin{equation*}
			\nabla u\cdot(\nabla^{2}u\eta)=u_{r}(\nabla^{2}u:\eta\otimes\nu)=u_{r}u_{rr}\zeta,
		\end{equation*}
		due to \eqref{Hessian}.
		
		Since $u_{rr}+\frac{n-1}{R}u_{r}+f=0$ and \eqref{eq:Robin equations}, $u_{rr}-\beta Hu+f=0$
		, where $H=\frac{n-1}{R}$ is the mean curvature on $\partial B_{R}$. This implies that $\beta Hu=u_{rr}+f$. Using this representation, we have
		\begin{align*}
			K_{3}:=&\frac{d}{dt}\Big|_{t=0}\int_{\partial F_{t}(B_{R})}\frac{\beta}{2}u^{2}(t)H(t)\zeta \, d\sigma_{t}\\
			=&\int_{\partial B_{R}} \left(\beta uvH\zeta+\beta uH\zeta<\nabla u,\eta>+\frac{\beta}{2}u^{2}\zeta(-\Delta_{\partial B_{R}}\zeta-\frac{n-1}{R^{2}}\zeta)+\frac{\beta}{2}u^{2}H\zeta \div\eta \right) \, d\sigma\\
			=&\int_{\partial B_{R}} \left(\beta uvH\zeta+\beta uu_{r}H\zeta^{2}+\frac{\beta}{2}u^{2}\zeta(-\Delta_{\partial B_{R}}\zeta-\frac{n-1}{R^{2}}\zeta)+\frac{\beta}{2}u^{2}H\zeta \div\eta \right) \, d\sigma\\
			=&\int_{\partial B_{R}} \left(\beta Hu(v\zeta+u_{r}\zeta^{2})+\frac{\beta}{2}u^{2}\zeta(-\Delta_{\partial B_{R}}\zeta-\frac{n-1}{R^{2}}\zeta)+\frac{\beta}{2}u^{2}H\zeta \div\eta \right) \, d\sigma\\
			=&\int_{\partial B_{R}} \left( (u_{rr}+f)(v\zeta+u_{r}\zeta^{2}) \, d\sigma+\int_{\partial B_{R}}\frac{\beta}{2}u^{2}\zeta(-\Delta_{\partial B_{R}}\zeta-\frac{n-1}{R^{2}}\zeta) \right) \, d\sigma+\int_{\partial B_{R}}\frac{\beta}{2}u^{2}H\zeta \div\eta \, d\sigma.
		\end{align*}
		In the end,
		\begin{align*}
			K_{4}:=&\frac{d}{dt}\Big|_{t=0}\int_{\partial F_{t}(B_{R})}-fu(t)\zeta \, d\sigma_{t}\\
			=&\int_{\partial B_{R}} \left(-(\nabla f\cdot\eta)u\zeta-fv\zeta-f<\nabla u,\eta>\zeta-fu\zeta \div\eta \right) \, d\sigma\\
			=&\int_{\partial B_{R}} \left(-f_{r}u\zeta^{2}-fv\zeta-fu_{r}\zeta^{2}-fu\zeta \div\eta \right) \, d\sigma\\
			=&\int_{\partial B_{R}} \left(-f_{r}u\zeta^{2}-f(v\zeta+u_{r}\zeta^{2})-fu\zeta \div\eta \right) \, d\sigma.
		\end{align*}
		In view that $F_{t}$ is volume-preserving with respect to $B_{R}$, by Proposition \ref{vp} we have	
		\begin{align*}
			\int_{\partial B_{R}}\zeta \, d\sigma=0 \quad \textit{and} \quad \int_{\partial B_{R}}\zeta \div\eta \,d\sigma=0.
		\end{align*}
		Summarizing the above equations, we have
		\begin{align*}
			\frac{d^{2}}{dt^{2}}\Big|_{t=0}J_{\beta}^{f}(F_{t}(B_{R}))&=K_{1}+K_{2}+K_{3}+K_{4}\\
			&=\frac{\beta}{2} \left(u(R)\right)^{2}\int_{\partial B_{R}} \left( -\Delta_{\partial B_R}\zeta-\frac{(n-1)}{R^{2}}\zeta \right)\zeta \,  d\sigma-\int_{\partial B_{R}}f_{r}u\zeta^{2} \, d\sigma\\
			&+\int_{\partial B_{R}}(v\zeta+u_{r}\zeta^{2})(u_{rr}-\beta^{2}u) \, d\sigma.
		\end{align*} 
		
	\end{proof}
	
	In order to understand that by imposing what condition on a positive radial function $f$ will ball be stable, we would like to express $u(R), u_{r}(R)$ and $u_{rr}(R)$ in terms of the information of $f$. Since $f$ is radial, by the divergence theorem, we have 
	\begin{equation*}
		-\beta u(R)\int_{\partial B_{R}} \, d\sigma=\int_{\partial B_{R}}-\beta u \, d\sigma=\int_{\partial B_{R}}\frac{\partial u}{\partial \nu} \, d\sigma=\int_{B_{R}}\Delta u \, dx=-\int_{B_{R}}f \, dx.
	\end{equation*}
	Then,
	\begin{equation}
		u(R)=\frac{R}{n\beta}\bar{f}(R).
		\label{eq:represent u(R)}
	\end{equation}
	By \eqref{eq:Robin equations}, we have 
	\begin{equation}
		u_{r}(R)=-\beta u(R)=-\frac{R}{n}\bar{f}(R),
		\label{eq:represent u_{r}(R)}
	\end{equation}
 and
	\begin{equation}
		u_{rr}(R)=-f(R)-\frac{n-1}{R}u_{r}(R)=-f(R)+\frac{n-1}{n}\bar{f}(R).
		\label{eq:represent u_rr(R)}
	\end{equation}
	
	Now we are ready to prove Theorem \ref{thm:Robin stable iff condition}.
	
	\begin{proof}[Proof of Theorem \ref{thm:Robin stable iff condition}]
		Let $\eta$ be a smooth velocity field of the volume preserving flow
		starting from $B_{R}$. Since the first eigenvalue of Laplacian on
		the unit sphere is $(n-1)$, it follows from Proposition \ref{pro:Robin-second variation} that
		
		\begin{equation}
			\frac{d^{2}}{dt^{2}}\Big|_{t=0}J_{\beta}^{f}(F_{t}(B_{R}))\geq\int_{\partial B_{R}}-f_{r}u\zeta^{2} \, d\sigma+\int_{\partial B_{R}}(v\zeta+u_{r}\zeta^{2})(u_{rr}-\beta^{2}u) \, d\sigma,
			\label{eq:fir-yh}
		\end{equation}
		where $u$ is the solution to \eqref{eq:Robin equations} with $\Omega=B_{R}$. By \eqref{eqs:v-equations},
		we have $\frac{\partial v}{\partial\nu}+\beta v=-(\beta u_{r}\zeta+u_{rr}\zeta)$ on $\partial B_{R}$,
		and so
		\begin{equation}
			\int_{\partial B_{R}}(\frac{\partial v}{\partial\nu}+\beta v) \, d\sigma=-\int_{\partial B_{R}}(\beta u_{r}\zeta+u_{rr}\zeta) \, d\sigma=0,
			\label{eq:sec-yh}
		\end{equation}
		where we used $\int_{\partial B_{R}}\zeta d\sigma=0$ and that $u$ is radial. By the divergence
		theorem, we have
		\begin{equation}
			\int_{\partial B_{R}}\frac{\partial v}{\partial\nu}d\sigma=\int_{B_{R}}\Delta vdx=0,
			\label{eq:thi-yh}
		\end{equation}
		where we have used \eqref{eqs:v-equations}. Comparing (\ref{eq:sec-yh}) with (\ref{eq:thi-yh}), we have 
		\begin{equation}
  \label{v0}
			\int_{\partial B_{R}}v \, d\sigma=0.
		\end{equation}
With this observation, we can now estimate $\int_{\partial B_{R}}v\zeta(u_{rr}-\beta^{2}u) \, d\sigma$.
		In view of \eqref{v0} and that the second Steklov eigenvalue
		over $B_{R}$ is $1/R$, we have 
		\begin{equation*}
			\int_{\partial B_{R}}v^{2} \, d\sigma\leq R\int_{B_{R}}|\nabla v|^{2} \, dx.
		\end{equation*}
		Hence
		\begin{align*}
			\int_{\partial B_{R}}v^{2} \, d\sigma\leq& R\int_{B_{R}}|\nabla v|^{2} \, dx\\
			=&R\int_{\partial B_{R}}\frac{\partial v}{\partial\nu}v \, d\sigma\\
			=&R\int_{\partial B_{R}}(-\beta v-\beta u_{r}\zeta-u_{rr}\zeta)v \, d\sigma\\
			=&R\int_{\partial B_{R}}-\beta v^{2} \, d\sigma+R\int_{\partial B_{R}}(\beta^{2}u-u_{rr})v\zeta \, d\sigma,
		\end{align*}
		where we have used \eqref{eq:Robin equations} and \eqref{eqs:v-equations}. Therefore, we have
		\begin{equation*}
			\int_{\partial B_{R}}v^{2} \, d\sigma\leq\frac{R}{1+\beta R}\int_{\partial B_{R}}(\beta^{2}u-u_{rr})v\zeta \, d\sigma.
		\end{equation*}
	Hence
		\begin{align*}
			\left(\int_{\partial B_{R}}(u_{rr}-\beta^{2}u)v\zeta \, d\sigma \right)^{2}\leq&\int_{\partial B_{R}}(u_{rr}-\beta^{2}u)^{2}\zeta^{2} \, d\sigma\int_{\partial B_{R}}v^{2} \, d\sigma\\
			\leq&\frac{R}{1+\beta R}\int_{\partial B_{R}}(u_{rr}-\beta^{2}u)^{2}\zeta^{2} \, d\sigma\int_{\partial B_{R}}(\beta^{2}u-u_{rr})v\zeta \, d\sigma.
		\end{align*}
	We now have the estimate for $\int_{\partial B_{R}}v\zeta(u_{rr}-\beta^{2}u) \, d\sigma$
		as below:
		\begin{equation}
			\int_{\partial B_{R}}(\beta^{2}u-u_{rr})v\zeta \, d\sigma\leq\frac{R}{1+\beta R}\int_{\partial B_{R}}(\beta^{2}u-u_{rr})^{2}\zeta^{2} \, d\sigma.
			\label{eq:fou-yh}
		\end{equation}
		Combining \eqref{eq:fir-yh} and \eqref{eq:fou-yh}, we have
		\begin{align*}
			\frac{d^{2}}{dt^{2}}\Big|_{t=0}J_{\beta}^{f}(F_{t}(B_{R}))\geq&\int_{\partial B_{R}}-f_{r}u\zeta^{2} \, d\sigma+\int_{\partial B_{R}}u_{r}\zeta^{2}(u_{rr}-\beta^{2}u) \, d\sigma+\int_{\partial B_{R}}v\zeta(u_{rr}-\beta^{2}u) \, d\sigma\\
			\geq&\int_{\partial B_{R}} \left(-f_{r}u+u_{r}(u_{rr}-\beta^{2}u)-\frac{R}{1+\beta R}(u_{rr}-\beta^{2}u)^{2}\right)\zeta^{2} \, d\sigma\\
			=&\int_{\partial B_{R}}\left(-f_{r}u+(u_{rr}-\beta^{2}u)(u_{r}-\frac{R}{1+\beta R}(u_{rr}-\beta^{2}u))\right)\zeta^{2} \, d\sigma.
		\end{align*}
		Using formulas \eqref{eq:represent u(R)}, \eqref{eq:represent u_{r}(R)} and \eqref{eq:represent u_rr(R)}, we have 
		\begin{align*}
			&\frac{d^{2}}{dt^{2}}\Big|_{t=0}J_{\beta}^{f}(F_{t}(B_{R}))\\
        \geq&\int_{\partial B_{R}}\frac{-R}{1+\beta R}\left( \left(f-\frac{n-1-R\beta}{n}\bar{f}(R) \right)(f-\bar{f}(R))+\frac{1+\beta R}{n\beta}f_{r}\bar{f}(R)\right)\zeta^{2} \, d\sigma.
		\end{align*}
		Therefore, if $f$ satisfies \eqref{eq: Robin stable iff condition}, then we have $\frac{d^{2}}{dt^{2}}\Big|_{t=0}J_{\beta}^{f}(F_{t}(B_{R}))\geq0$.
		
		Conversely, we suppose that $B_{R}$ is stable to $J_{\beta}^{f}(\cdot)$,
		then specially we have 
		\begin{equation*}
			\frac{d^{2}}{dt^{2}}\Big|_{t=0}J_{\beta}^{f}(F_{t}(B_{R}))\geq0,
		\end{equation*}
		for $F_{t}(B_{R})=\{x+t\eta:x\in B_{R}\}$ where $\eta$ is a nonzero constant
		vector field. Let $\eta=(c_{1},c_{2},\dots,c_{n})^{T}$, and thus 
		\begin{equation*}
			\zeta=\frac{1}{R}\sum_{i=1}^{n}c_{i}x_{i}.
		\end{equation*}
		With this choice of $F_{t}$, $\zeta$ is now the first eigenfunction
		of Laplacian on $\partial B_{R}$. Hence, we have 
		\begin{equation*}
			\frac{d^{2}}{dt^{2}}\Big|_{t=0}J_{\beta}^{f}(F_{t}(B_{R}))=\int_{\partial B_{R}}-f_{r}u\zeta^{2} \, d\sigma+\int_{\partial B_{R}}(v\zeta+u_{r}\zeta^{2})(u_{rr}-\beta^{2}u) \, d\sigma.
		\end{equation*}
		In this choice of $\eta$ and from \eqref{eqs:v-equations}, $v$ must be of the form
		\begin{equation}
		v(x)=a\sum_{i=1}^{n}c_{i}x_{i}+b.
			\label{eq:v-zeta}
		\end{equation}
		From $\eqref{eqs:v-equations}_2$, we have 
		\begin{align*}
			0=&\frac{\partial v}{\partial\nu}+\beta v+\beta u_{r}\zeta+u_{rr}\zeta\\
			=&a\zeta+\beta(aR\zeta+b)+(\beta u_{r}+u_{rr})\zeta\\
			=&(a+a\beta R+\beta u_{r}+u_{rr})\zeta+\beta b
		\end{align*}
		on $\partial B_{R}$. Therefore, $v$ is a solution to \eqref{eqs:v-equations} if and only if
		\begin{equation}
			\begin{cases}
				a+a\beta R+\beta u_{r}(R)+u_{rr}(R)=0 \\ b=0
			\end{cases}
			\label{eq:v-coeff}
		\end{equation}
		Now we have 
		\begin{align*}
			\frac{d^{2}}{dt^{2}}\Big|_{t=0}J_{\beta}^{f}(F_{t}(B_{R}))=&\int_{\partial B_{R}}-f_{r}u\zeta^{2} \, d\sigma+\int_{\partial B_{R}}(v\zeta+u_{r}\zeta^{2})(u_{rr}-\beta^{2}u) \, d\sigma\\
			=&\int_{\partial B_{R}} \left(-f_{r}u+(aR+u_{r})(u_{rr}-\beta^{2}u)\right)\zeta^{2} \, d\sigma\\
			=&\int_{\partial B_{R}} \left(-f_{r}u+(u_{rr}-\beta^{2}u)(u_{r}-\frac{R}{1+\beta R}(u_{rr}-\beta^{2}u))\right)\zeta^{2} \, d\sigma,
		\end{align*}
		where we have used (\ref{eq:v-zeta}) and (\ref{eq:v-coeff}). Using \eqref{eq:represent u(R)}, \eqref{eq:represent u_{r}(R)}
		and \eqref{eq:represent u_rr(R)}, we have
		\begin{equation*}
			\frac{d^{2}}{dt^{2}}\Big|_{t=0}J_{\beta}^{f}(F_{t}(B_{R}))=\int_{\partial B_{R}}\frac{-R}{1+\beta R} \left( \left(f-\frac{n-1-R\beta}{n}\bar{f}(R) \right)(f-\bar{f}(R))+\frac{1+\beta R}{n\beta}f_{r}\bar{f}(R)\right)\zeta^{2} \, d\sigma.
		\end{equation*}
		Since $\left(f-\frac{n-1-R\beta}{n}\bar{f}(R)\right)(f-\bar{f}(R))+\frac{1+\beta R}{n\beta}f_{r}\bar{f}(R)$
		is a constant on the boundary $\partial B_{R}$, the stability of $B_R$ immediately implies that
		\begin{equation*}
			\left(f(R)-\frac{n-1-R\beta}{n}\bar{f}(R)\right)(f(R)-\bar{f}(R))+\frac{1+\beta R}{n\beta}f_{r}(R)\bar{f}(R)\leq0.
		\end{equation*}
	\end{proof}

\begin{remark}
\label{translation}
    From the proof above, we immediately have that if \eqref{eq: Robin stable iff condition} is not satisfied, then $J^f_\beta(\cdot)$ strictly increases after a small translation of $B_R$. This provides the precise direction of variation along which $B_R$ is not stable to $J^f_\beta(\cdot)$ when \eqref{eq: Robin stable iff condition} is violated.
\end{remark}  

\begin{remark}
\label{rmk-iff}
The condition \eqref{eq: Robin stable iff condition} also says that the stability of ball shape not only depends on both the dimension $n$ and the size of $\beta$, but also depends on the size of the ball. See also the fourth statement in Corollary \ref{cor1}.
\end{remark}

\vskip 0.2cm
At the end of the section, we give some other remarks. Given a $C^{2}$ shape functional $\mathcal{E}(\cdot)$, we say that $B_{R}$ is \textit{strictly stable} to $\mathcal{E}(\cdot)$, if $\frac{d^{2}}{dt^{2}}\big|_{t=0}\mathcal{E}(F_{t}(B_{R}))$ is strictly positive for any smooth volume preserving flow map $F_{t}$. For example, $B_{R}$ is strictly stable for $J^{f}_{\beta}(\cdot)$ when the inequality \eqref{eq: Robin stable iff condition} is strict.

Dambrine computed the second order variation formula for $J^{f}_{\infty}(\cdot)$ in \cite{Dam}, which also provides the stability condition for $J^{f}_{\infty}(\cdot)$ at $B_{R}$. We also refer to his paper with Pierre \cite{DP00}. Their results state that when $f$ is a radial function about the origin, $B_{R}$ is stable to $J_{\infty}^{f}(\cdot)$ if and only if $f(R)\leq\bar{f}(R)$. When this inequality for $f$ is strict, then $B_{R}$ is strictly stable to $J^{f}_{\infty}(\cdot)$.  

By the result of Dambrine-Pierre and a closer look at \eqref{eq: Robin stable iff condition}, the stability conditions for $J_{\infty}^f(\cdot)$ and $J_{\beta}^f(\cdot)$ at $B_R$ can be related via the following corollary.
\begin{corollary}
	\label{thm:limit&general_ball_stable}
	Let $f>0$ be a smooth radial function about the origin, and $B_{R} \subset \mathbb{R}^n$ be the ball whose radius is R centered at the origin. 
	\begin{enumerate}
	    \item If $B_{R}$ is strictly stable to $J_{\infty}^{f}(\cdot)$, then $B_{R}$ is also strictly stable to $J_{\beta}^{f}(\cdot)$ as $\beta$ sufficient large.
	   \item  If $B_{R}$ is not stable to $J_{\infty}^{f}(\cdot)$, then $B_{R}$ is also unstable to $J_{\beta}^{f}(\cdot)$ as $\beta$ sufficient large.
	\end{enumerate}		
\end{corollary}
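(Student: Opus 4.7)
The plan is to read both statements off directly from the stability criterion \eqref{eq: Robin stable iff condition} by tracking its behavior as $\beta \to \infty$, combined with the Dambrine--Pierre characterization of (strict) stability of $B_R$ for $J_\infty^f(\cdot)$ recalled just before the corollary.

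First, I would rewrite the left-hand side of \eqref{eq: Robin stable iff condition}, call it $\Phi(\beta)$, by separating the $\beta$-dependence explicitly:
\begin{align*}
\Phi(\beta) := \left(f(R)-\frac{n-1-R\beta}{n}\bar f(R)\right)\bigl(f(R)-\bar f(R)\bigr)+\frac{1+\beta R}{n\beta}f_r(R)\bar f(R),
\end{align*}
which after regrouping the $\beta$-linear term gives
\begin{align*}
\Phi(\beta)=\frac{R\bar f(R)}{n}\bigl(f(R)-\bar f(R)\bigr)\beta+\Phi_0+\frac{f_r(R)\bar f(R)}{n\beta},
\end{align*}
where $\Phi_0 := \bigl(f(R)-\tfrac{n-1}{n}\bar f(R)\bigr)(f(R)-\bar f(R))+\tfrac{R}{n}f_r(R)\bar f(R)$ is a constant independent of $\beta$. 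Thus, as $\beta \to \infty$, $\Phi(\beta)$ is dominated by the linear term whose sign is precisely the sign of $f(R)-\bar f(R)$; the remaining terms are $O(1)$ and $o(1)$ respectively.

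For statement (1): by the Dambrine--Pierre characterization, strict stability of $B_R$ for $J_\infty^f(\cdot)$ means $f(R)<\bar f(R)$. Since $\bar f(R)>0$ and $R>0$, the coefficient of $\beta$ in $\Phi(\beta)$ is then strictly negative, so $\Phi(\beta)\to -\infty$ as $\beta\to\infty$; in particular $\Phi(\beta)<0$ for $\beta$ sufficiently large. Then \eqref{eq: Robin stable iff condition} holds with strict inequality, and I would invoke the proof of Theorem \ref{thm:Robin stable iff condition} to conclude that the second variation of $J_\beta^f$ at $B_R$ is in fact strictly positive along every nontrivial volume-preserving smooth flow (the integrand in the lower bound derived there is a strictly negative constant times $\zeta^2$, and $\zeta\not\equiv 0$ for a nontrivial volume-preserving variation). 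Hence $B_R$ is strictly stable to $J_\beta^f(\cdot)$ for all sufficiently large $\beta$.

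For statement (2): instability of $B_R$ for $J_\infty^f(\cdot)$ forces $f(R)>\bar f(R)$ (the borderline case $f(R)=\bar f(R)$ already yields stability by Dambrine--Pierre). Then the coefficient of $\beta$ in $\Phi(\beta)$ is strictly positive, so $\Phi(\beta)\to +\infty$ and $\Phi(\beta)>0$ for $\beta$ sufficiently large, violating \eqref{eq: Robin stable iff condition}. By Theorem \ref{thm:Robin stable iff condition} (and Remark \ref{translation} identifying the destabilizing direction as an infinitesimal translation), $B_R$ is not stable to $J_\beta^f(\cdot)$ for such $\beta$. I do not anticipate any real obstacle in this argument; the main point to be careful about is the strict/non-strict distinction in the Dambrine--Pierre criterion, ensuring that ``not stable'' rules out the boundary case $f(R)=\bar f(R)$ so that the leading $\beta$-coefficient is genuinely nonzero.
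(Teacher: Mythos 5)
Your argument is correct and is precisely the ``closer look at \eqref{eq: Robin stable iff condition}'' that the paper invokes without further detail: after regrouping, the condition reads $-A_1\beta + A_0 - A_2/\beta \le 0$ with $\beta$-coefficient $\frac{R\bar{f}(R)}{n}\left(f(R)-\bar{f}(R)\right)$, whose sign is exactly the Dambrine--Pierre dichotomy $f(R)\lessgtr \bar{f}(R)$, so both statements follow for large $\beta$. One small slip in part (1): the integrand of the second-variation lower bound in the proof of Theorem \ref{thm:Robin stable iff condition} is $\frac{-R}{1+\beta R}\,\Phi(\beta)\,\zeta^2$, which is a strictly \emph{positive} constant times $\zeta^{2}$ when $\Phi(\beta)<0$ --- as written (``a strictly negative constant times $\zeta^2$'') it would not yield strict positivity of the second variation.
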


\section{Global results on optimality of the ball and counterexample of Robin Talenti pointwise comparison}
In this section, we will prove three global optimization results, and we will also provide a counterexample to the Robin Talenti pointwise comparison results as a consequence.

First, recall that in the previous section, our stability results indicate that for any $\beta>0$, when $R<\tfrac{n-1}{\beta}$, there always exists a radially decreasing function $f$ such that $B_R$ is not a local minimizer to $J^f_\beta(\cdot)$ among domains with the same volume as $|B_R|$.  However, when $\beta=+\infty$, we will show that for any $R>0$ and any radially decreasing function $f$, $B_R$ is always a global minimizer to $J^f_\infty(\cdot)$ among domains with the same volume as $|B_R|$, as stated in the following proposition.


\begin{proposition}
\label{thm:global}
Let $\Omega\subset \R^{n}$ be a bounded domain with Lipschitz boundary and $f>0$ be a radially decreasing function about the origin. Then 
\begin{align}
\label{eq:global}
J_{\infty}^{f}(\Omega^{\sharp})\leq J_{\infty}^{f}(\Omega),
\end{align}
where $\Omega^{\sharp}$ is a ball centered at the origin with the same volume as $|\Omega|$.
\end{proposition}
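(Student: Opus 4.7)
The plan is to use the variational characterization (\ref{eq:Dirichlet energy}) together with Schwarz symmetrization. Write $u:=u_\Omega^f$ (extended by zero to $\R^n$); the maximum principle applied to the Dirichlet problem (\ref{Dirichletcondition}) yields $u\ge 0$ since $f\ge 0$. Let $u^*$ denote the symmetric decreasing rearrangement of $u$ on the ball $\Omega^\sharp$; then $u^*\in H_0^1(\Omega^\sharp)$ and is admissible as a trial function for $J_\infty^f(\Omega^\sharp)$. The P\'olya--Szeg\H{o} inequality controls the Dirichlet energy,
$$\int_{\Omega^\sharp}|\nabla u^*|^2\, dx\le \int_\Omega|\nabla u|^2\, dx.$$

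The crux is to show that the linear term cannot decrease under this substitution, namely $\int_\Omega f u\, dx\le \int_{\Omega^\sharp}f u^*\, dx$. Letting $(f|_\Omega)^*$ denote the symmetric decreasing rearrangement of $f|_\Omega$ on $\Omega^\sharp$, the Hardy--Littlewood rearrangement inequality already gives $\int_\Omega f u\le\int_{\Omega^\sharp}(f|_\Omega)^* u^*$. What remains is the pointwise comparison $(f|_\Omega)^*\le f|_{\Omega^\sharp}$ on $\Omega^\sharp$, which is the only place where the radial monotonicity of $f$ enters. Since $f$ is radially decreasing about the origin, every super-level set $\{f>t\}$ is a concentric ball $B_t$; a short case-split on whether $|B_t|\le|\Omega^\sharp|$ or $|B_t|>|\Omega^\sharp|$, together with $|\Omega|=|\Omega^\sharp|$, yields
$$|\{x\in\Omega:f(x)>t\}|=|B_t\cap\Omega|\le |B_t\cap\Omega^\sharp|=|\{x\in\Omega^\sharp:f(x)>t\}|,\qquad \forall\, t>0.$$
Since the symmetric decreasing rearrangement is determined by the distribution function and $f|_{\Omega^\sharp}$ is already radially decreasing, this distributional inequality translates into the pointwise bound $(f|_\Omega)^*\le f|_{\Omega^\sharp}$, and since $u^*\ge 0$, the desired inequality on the linear term follows.

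Chaining the two ingredients,
$$J_\infty^f(\Omega^\sharp)\le \tfrac{1}{2}\int_{\Omega^\sharp}|\nabla u^*|^2\, dx-\int_{\Omega^\sharp}f u^*\, dx\le \tfrac{1}{2}\int_\Omega|\nabla u|^2\, dx-\int_\Omega f u\, dx=J_\infty^f(\Omega),$$
which is (\ref{eq:global}). The only non-routine point is the distributional comparison $(f|_\Omega)^*\le f|_{\Omega^\sharp}$, and it is the main obstacle, though a short one: it is what allows one to replace the rearranged source $(f|_\Omega)^*$ produced by Hardy--Littlewood with the actual datum $f$ on $\Omega^\sharp$, and it is precisely here that the hypothesis \emph{radially decreasing about the origin} (rather than just radially symmetric, or centered elsewhere) is genuinely used. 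Alternatively, one could invoke Talenti's classical pointwise comparison for (\ref{Dirichletcondition}) in place of the P\'olya--Szeg\H{o} step, combined with the identity $-2J_\infty^f(\Omega)=\int_\Omega f u_\Omega^f$ from (\ref{average-heat}), to arrive at the same chain of inequalities.
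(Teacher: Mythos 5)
Your proof is correct, and your primary route is genuinely different from the paper's. The paper works with the energy identity $-2J_{\infty}^{f}(\Omega)=\int_{\Omega}fu_{\Omega}^{f}\,dx$ from \eqref{average-heat} and chains three comparisons on this quantity: Hardy--Littlewood, then Talenti's pointwise comparison $(u_{\Omega}^{f|_{\Omega}})^{\sharp}\le u_{\Omega^{\sharp}}^{(f|_{\Omega})^{\sharp}}$, and finally Lemma \ref{lemma} combined with the maximum principle applied to the \emph{source} (i.e.\ $(f|_{\Omega})^{\sharp}\le f$ on $\Omega^{\sharp}$ forces $u_{\Omega^{\sharp}}^{(f|_{\Omega})^{\sharp}}\le u_{\Omega^{\sharp}}^{f}$). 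You instead test the variational characterization \eqref{eq:Dirichlet energy} on $\Omega^{\sharp}$ with the rearranged trial function $u^{*}=(u_{\Omega}^{f})^{\sharp}$, replacing Talenti's theorem by the P\'olya--Szeg\H{o} inequality; the only ingredient you share with the paper is the pointwise bound $(f|_{\Omega})^{\sharp}\le f$ on $\Omega^{\sharp}$, which is precisely the paper's Lemma \ref{lemma}, and your case-split on the concentric balls $\{f>t\}$ versus $\Omega^{\sharp}$ is a correct (and arguably cleaner) proof of it. Your route is more elementary, needing only the two classical rearrangement inequalities rather than the full strength of Talenti's pointwise comparison; the paper's route has the advantage of running in parallel with the Robin case (Proposition \ref{thm:global2}), where no P\'olya--Szeg\H{o}/trial-function argument is available because the boundary term does not behave well under rearrangement and one genuinely needs the Talenti-type comparison of Alvino--Nitsch--Trombetti. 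You in fact note the paper's route as an alternative in your final sentence, so the two approaches meet.
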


Second, we consider the maximization of the averaged temperature functional. Different from the stability breaking results as stated in Corollary \ref{cor1} in the case of maximizing the averaged heat, the next result shows that for any radially decreasing function $f$, any $\beta>0$, and any volume constraints, the ball centered at the maximal point of $f$ is always the maximizer to the averaged temperature, at least in two dimensions.

\begin{proposition}
\label{thm:global2}
Let $\Omega\subset \R^{n}$ be a bounded domain with Lipschitz boundary and $f>0$ be a radially decreasing function about the origin. Let $u_\Omega^{f,\beta}$  and $u_\Omega^f$ be the solutions to \eqref{eq:Robin equations} and \eqref{Dirichletcondition}, respectively. Then, for the shape functionals $A_{\beta}^{f}(\Omega):=\int_{\Omega}u_{\Omega}^{f,\beta}dx$ and $A_{\infty}^{f}(\Omega):=\int_{\Omega}u_{\Omega}^{f}dx$, we have:
\begin{align}
\label{2dimresult}
A_{\beta}^{f}(\Omega)\leq A_{\beta}^{f}(\Omega^{\sharp}), \quad \mbox{for $n=2$ and $\forall\, \beta\in (0,+\infty)$}
\end{align}
and 
\begin{align}
A_{\infty}^{f}(\Omega)\leq A_{\infty}^{f}(\Omega^{\sharp}), \quad \mbox{for all $n\geq 2$},
\end{align}
where $\Omega^{\sharp}$ is the ball centered at the origin with the same volume as that of $\Omega$.
\end{proposition}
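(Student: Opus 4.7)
The plan is to prove both statements by combining Schwarz symmetrization of the state function with a Talenti-type pointwise comparison — the classical Talenti inequality in the Dirichlet case, and the two-dimensional Alvino–Nitsch–Trombetti Robin comparison from \cite{AN} in the Robin case. Throughout, let $g := f|_\Omega$ and let $g^{\sharp}$ denote its Schwarz symmetrization on $\Omega^{\sharp}$.

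First I would record a purely measure-theoretic inequality exploiting the radial monotonicity of $f$: for each level $t>0$, the superlevel set $\{f>t\}$ in $\mathbb{R}^n$ is a ball $B_{r(t)}$ about the origin, so
\[
|\{x\in\Omega:\,f(x)>t\}| \;=\; |\Omega\cap B_{r(t)}| \;\leq\; \min\bigl(|B_{r(t)}|,\,|\Omega|\bigr) \;=\; |\{x\in\Omega^{\sharp}:\,f(x)>t\}|.
\]
Since $f|_{\Omega^{\sharp}}$ is itself radially decreasing, this comparison of distribution functions together with the equimeasurability characterization of Schwarz symmetrization yields the pointwise estimate
\begin{equation}
\label{proposal-pw}
g^{\sharp}(x) \;\leq\; f(x)\qquad\text{for a.e.\ } x\in\Omega^{\sharp}.
\end{equation}

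Next, let $w$ denote the solution on $\Omega^{\sharp}$ of the symmetrized problem with right-hand side $g^{\sharp}$ and Robin (resp.\ Dirichlet) boundary condition. The Talenti-type pointwise inequality
\[
(u_\Omega^{f,\beta})^{\sharp}(x) \;\leq\; w(x) \qquad \text{for a.e.\ } x\in\Omega^{\sharp}
\]
follows from \cite{AN} in the Robin case (and only when $n=2$, which is exactly why the first part of the statement is restricted to planar domains), and from the classical Talenti inequality in the Dirichlet case for every $n\geq 2$. Using \eqref{proposal-pw} and the weak maximum principle (which for the Robin problem uses $w^{-}$ as a test function and exploits $\beta>0$), the difference $u_{\Omega^{\sharp}}^{f,\beta}-w$ solves the corresponding boundary-value problem with nonnegative source and is therefore nonnegative, giving $w \leq u_{\Omega^{\sharp}}^{f,\beta}$ pointwise. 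Integrating and using that Schwarz symmetrization preserves the $L^{1}$ norm, one obtains
\[
A_\beta^{f}(\Omega) \;=\; \int_{\Omega^{\sharp}} (u_\Omega^{f,\beta})^{\sharp}\,dx \;\leq\; \int_{\Omega^{\sharp}} w\,dx \;\leq\; \int_{\Omega^{\sharp}} u_{\Omega^{\sharp}}^{f,\beta}\,dx \;=\; A_\beta^{f}(\Omega^{\sharp}),
\]
and the same chain proves the Dirichlet statement.

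The main obstacle is the dimension restriction in the Robin case. The Talenti-type pointwise comparison for Robin problems is known to fail for $n\geq 3$ — precisely the phenomenon the authors exploit in their stability counterexamples and in the byproduct discussed at the end of Section 4 — so this symmetrization argument is inherently two-dimensional when $\beta<\infty$ and genuinely depends on \cite{AN}. The Dirichlet portion, by contrast, is dimension-free because Talenti's classical inequality holds for every $n\geq 2$.
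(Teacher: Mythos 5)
Your proposal is correct and follows essentially the same route as the paper: Schwarz symmetrization of the state function combined with the two-dimensional Robin comparison of \cite{AN} (resp.\ classical Talenti for Dirichlet), the pointwise bound $(f|_\Omega)^{\sharp}\le f$ on $\Omega^{\sharp}$ (which is exactly the paper's Lemma \ref{lemma}, and your distribution-function derivation of it is sound), and the weak maximum principle to pass from the symmetrized source to $f$ itself. The only cosmetic difference is that you prove the rearrangement lemma inline rather than isolating it, and you spell out the $(u-w)^{-}$ test-function argument that the paper leaves implicit.
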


Third, we will obtain a similar result for a thin insulation shape optimization problem. All of these results are based on applications of the Talenti type results and the following simple observation.

\begin{lemma}
		\label{lemma}
		Suppose that $f>0$ is a radially decreasing function about the origin defined in $\mathbb{R}^n$, and $B_{R}\subset\mathbb{R}^n$ is a ball of radius $R$ centered at the origin. Then, for any measurable set $\Omega\subset \mathbb{R}^n$ satisfying $|\Omega|=|B_{R}|$, we have
		\begin{align}
			(f\Big|_{\Omega})^{\sharp}(x)\leq f(x)
		\end{align}
		for any $x$ in $B_{R}$, where $f\Big|_{\Omega}$ is the restriction of $f$ on $\Omega$ and $(f\Big|_{\Omega})^{\sharp}$ is the spherical decreasing rearrangement of $(f\Big|_{\Omega})$ with respect to $\Omega$.
	\end{lemma}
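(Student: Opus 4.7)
The proof is essentially a distribution-function argument comparing superlevel sets of $f$ and of $(f|_\Omega)^\sharp$. Since $|\Omega|=|B_R|$, the rearrangement $(f|_\Omega)^\sharp$ lives on the ball $B_R$, and is by definition the radially nonincreasing function characterized by the distribution identity
\[
\bigl|\{x\in B_R : (f|_\Omega)^\sharp(x)>t\}\bigr|=\bigl|\{y\in\Omega: f(y)>t\}\bigr|\qquad\text{for every }t\geq 0.
\]
The plan is to fix $x\in B_R$, set $s:=f(x)$ and $r:=|x|$, and show that $x$ does not lie in the superlevel set $\{(f|_\Omega)^\sharp>s\}$, which gives $(f|_\Omega)^\sharp(x)\leq s=f(x)$.

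First, I would use the hypothesis that $f$ is radially decreasing on all of $\mathbb{R}^n$ to observe that
\[
\{y\in\mathbb{R}^n : f(y)>s\}\subseteq B_r,
\]
because if $|y|\geq r$ then $f(y)\leq f(r)=s$. Intersecting with $\Omega$ and taking Lebesgue measure yields the central inequality
\[
\bigl|\{y\in\Omega : f(y)>s\}\bigr|\leq \bigl|\{y\in\mathbb{R}^n:f(y)>s\}\bigr|\leq |B_r|.
\]

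Next, I would invoke the defining property of the rearrangement to translate this back into geometric information on the ball $B_R$: the set $\{(f|_\Omega)^\sharp>s\}$ is radially symmetric and decreasing, hence equal to some open ball $B_{r_0}\subseteq B_R$ (possibly empty), and the distribution identity forces
\[
|B_{r_0}|=\bigl|\{y\in\Omega:f(y)>s\}\bigr|\leq |B_r|,
\]
so $r_0\leq r$. Consequently $x\notin B_{r_0}=\{(f|_\Omega)^\sharp>s\}$, which gives $(f|_\Omega)^\sharp(x)\leq s=f(x)$, as desired.

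This argument is quite direct; the only mild subtlety is handling possible plateaus of $f$ (where $f$ is not strictly decreasing in the radial variable). This is taken care of by using strict superlevel sets $\{f>s\}$ rather than $\{f\geq s\}$: on such sets the radial monotonicity of $f$ forces $|y|<r$ whenever $f(y)>f(r)$, regardless of whether $f$ is strictly decreasing. A measurability remark at the outset, namely that $\{y\in\Omega : f(y)>t\}$ is Lebesgue measurable for each $t$ (so that $(f|_\Omega)^\sharp$ is well-defined) completes the setup, and no deeper tool is needed.
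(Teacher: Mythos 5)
Your argument is correct and is essentially the paper's own proof: both reduce to the observation that $\{y\in\Omega: f(y)>t\}$ is contained in the superlevel set of $f$ on all of $\mathbb{R}^n$ (which, by radial monotonicity, sits inside a ball), so the distribution function of $f\big|_{\Omega}$ is dominated by that of $f\big|_{B_R}$ and the rearrangements compare pointwise. The only cosmetic difference is that you fix the single level $t=f(x)$ and argue on superlevel sets, whereas the paper states $\mu_{\Omega}(t)\leq\mu(t)$ for all $t$ and compares the infima defining the rearrangements; your assertion that $\{(f\big|_{\Omega})^{\sharp}>s\}$ is an \emph{open} ball is harmless here, since the borderline case $r_0=r$ with a closed superlevel set could only occur if the radial profile of $f$ had a jump at $|x|$, which is excluded for the smooth sources the paper works with.
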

To prove the lemma, let us recall the basic notion of spherical decreasing rearrangement. Let $\omega$ be a measurable set in $\mathbb{R}^n$, and we denote by $\omega^{\sharp}$ the ball of the same volume as that of $\omega$ centered at the origin. Suppose that $\varphi(x)$ is a non-negative measurable function defined on a measurable set $\omega\subset \mathbb{R}^n$, then the distribution function $\mu(t)$ is defined as $\mu(t)=|\{ x\in \omega: |\varphi(x)|>t \}|$, the decreasing rearrangement of $\varphi(x)$ with respect to $\omega$ is defined as $\varphi^{*}(s)=\inf \{t\geq0: \mu(t)<s \}$, and the spherically decreasing rearrangement is defined as $\varphi^{\sharp}(x)=\varphi^{*}(C_n |x|^n)$, where $C_n$ is the measure of the unit ball in $\mathbb{R}^n$. For properties of spherically symmetric rearrangement, we refer to Hardy \cite{Hardy} and Poly\'a-Szeg\"o \cite{PS}. 
	
	\begin{proof}[Proof of Lemma \ref{lemma}]
		Let $\mu_{\Omega}(t)=|\{x\in \Omega: f(x)>t\}|$, and $\mu(t)=|\{x\in B_{R}: f(x)>t\}|$. By definition, we have
		\begin{align}
			(f\Big|_{\Omega})^{\sharp}(x):=\inf \{t>0:\mu_{\Omega}(t)<C_{n}|x|^n\}
		\end{align}
	and 
	\begin{align}
		f(x)= \inf\{t>0:\mu(t)<C_{n}|x|^n\},
	\end{align}
	where $C_n$ is the volume of the unit ball in $\mathbb{R}^n$.
	In view that $\{x\in \Omega: f(x)>t\}$ is a subset of $\{x\in \mathbb{R}^n: f(x)>t\}$, we have that $\mu_{\Omega}(t)\leq \mu(t)$ for all $t>0$. 
	Therefore for any $x\in B_{R}$, we have $\inf \{t>0:\mu_{\Omega}(t)<C_{n}|x|^n\} \leq \inf\{t>0:\mu(t)<C_{n}|x|^n\}$. 
	Hence $(f\Big|_{\Omega})^{\sharp}(x)\leq f(x)$ for all $x$ in $\Omega^{\sharp}=B_{R}$.
	\end{proof}	
	
Before proving Proposition \ref{thm:global} and Proposition \ref{thm:global2}, we recall the Talenti's comparison results. Let $\Omega\subset \R^{n}$ be a bounded domain with Lipschitz boundary and $f$ be a smooth function defined in $\Omega$. Talenti's pointwise comparison result for the Dirichlet condition was proved by Talenti in his famous paper \cite{Talenti}, stating that 
\begin{align*}
(u_{\Omega}^{f\big|_{\Omega}})^{\sharp}\leq u_{\Omega^{\sharp}}^{(f\big|_{\Omega})^{\sharp}}, \quad \mbox{for all $x\in \Omega^{\sharp}$}.
\end{align*}
	
	There are at least two proofs of this result which are available. The original proof by Talenti \cite{Talenti} uses the classical isoperimetric inequality, involving the De Giorgi perimeter of $\Omega$. Another proof by Lions \cite{Lions} uses a differential inequality between the distribution functions of $u$ and $v$. The rigidity problem of Talenti's comparison result with Dirichlet boundary condition was also studied by Alvino, Lions and Trombetti \cite{AP}, and Kesavan \cite{Kesavan}. They proved that if $(u_{\Omega}^{f\big|_{\Omega}})^{\sharp}=u_{\Omega^{\sharp}}^{(f\big|_{\Omega})^{\sharp}}$ for any $x$ in $\Omega^{\sharp}$, then the domain $\Omega=\Omega^{\sharp}+x_0$, where $x_0$ is a fixed point in $\mathbb{R}^n$. 

Recently, Talenti-type comparison results have been successfully extended by Alvino, Nitsch and Trombetti to the case of Robin boundary conditions, see \cite{AN}. After this pioneering work, Robin Talenti-type comparison results have been extensively studied, including (but not limited to) \cite{ACNT, AL25, AP, BS25, CLW23, CNT24} and the references therein. For Robin boundary conditions, the validity of Talenti-type comparisons is sensitive to the spatial dimension, and several related open problems remain open, especially the pointwise comparison for $f \equiv 1$ in dimensions larger than or equal to $3$. Therefore, as an application of \cite{AN}, we can only prove \eqref{2dimresult} in two dimensions.

\begin{proof}[Proof of Proposition \ref{thm:global}]
Using the Hardy-Littlewood inequality and Talenti's comparison result for Dirichlet boundary condition, we have
\begin{align}
\label{eq:pfglobal1}
\int_{\Omega}u_{\Omega}^{f\big|_{\Omega}}fdx\leq \int_{\Omega^{\sharp}}(u_{\Omega}^{f\big|_{\Omega}})^{\sharp}(f\big|_{\Omega})^{\sharp}dx\leq \int_{\Omega^{\sharp}}u_{\Omega^{\sharp}}^{(f\big|_{\Omega})^{\sharp}}(f\big|_{\Omega})^{\sharp}dx.
\end{align}

From the Lemma \ref{lemma} and the maximum principle, we have
\begin{align}
\label{eq:pfglobal2}
\int_{\Omega^{\sharp}}u_{\Omega^{\sharp}}^{(f\big|_{\Omega})^{\sharp}}(f\big|_{\Omega})^{\sharp}dx\leq \int_{\Omega^{\sharp}}u_{\Omega^{\sharp}}^{f}fdx.
\end{align}

Combining with \eqref{eq:pfglobal1} and \eqref{eq:pfglobal2}, we have
\begin{align*}
-2J_{\infty}^{f}(\Omega)\leq \int_{\Omega}fu_{\Omega}^{f}dx\leq \int_{\Omega^{\sharp}}fu_{\Omega^{\sharp}}^{f}dx=-2J_{\infty}^{f}(\Omega^{\sharp}).
\end{align*}
We complete the proof.
\end{proof}

\begin{proof}[Proof of Proposition \ref{thm:global2}]
By properties of rearrangement and the Robin-Laplacian Talenti-type pointwise comparison results, see \cite[Theorem 1.1]{AN}, we have that when $n=2$,
\begin{align}
\label{attention}
    \int_{\Omega} u_{\Omega}^{f,\beta}\, dx = \int_{\Omega^\sharp} (u_{\Omega}^{f,\beta})^{\sharp}\, dx\le \int_{\Omega^\sharp} u_{\Omega^{\sharp}}^{(f\big|_{\Omega})^{\sharp},\beta}\, dx,
\end{align}
Using Lemma \ref{lemma} and the maximum principle, we have
\begin{align}
\label{attention2}
\int_{\Omega^\sharp} u_{\Omega^{\sharp}}^{(f\big|_{\Omega})^{\sharp},\beta}\, dx\leq \int_{\Omega^{\sharp}}u_{\Omega^{\sharp}}^{f,\beta}dx.
\end{align}
Combining with \eqref{attention} and \eqref{attention2}, we have
\begin{align*}
\int_{\Omega}u_{\Omega}^{f,\beta}dx\leq \int_{\Omega^{\sharp}}u_{\Omega^{\sharp}}^{f,\beta}dx.
\end{align*}

For the case $\beta=\infty$, since the Dirichlet-Laplacian Talenti's comparison results are true in any dimensions, a similar argument as above implies that 
\begin{align*}
    \int_{\Omega}u_{\Omega}^{f}dx\leq \int_{\Omega^{\sharp}}u_{\Omega^{\sharp}}^{f}dx.
\end{align*}
\end{proof}

\begin{remark}
Theorem \ref{thm:global2} admits a physical interpretation: in conduction heat transfer in any dimension, and in convection heat transfer in two dimensions, if the heat source is radially decreasing and concentrated around a single point, then the optimal way to maximize the averaged temperature is to choose the domain as a ball and place the heat source at its center. Whether or not the ball remains the optimal shape for convection heat transfer in higher dimensions remains an open question.
\end{remark}

Talenti-type results can be applied to consider another shape optimization problem from thin insulation background. For the physical background and mathematical derivations, we refer to \cite{Buttazzo}, \cite{BBN} and \cite{BBN1}, and see also \cite{HLL1}.

Here we briefly recall the setting. In this thin insulation problem introduced in \cite{Buttazzo}, the width of the insulating layer is assumed to be comparable to the heat transmission coefficient along the inner layer, the heat conduction transfer mode is dominant along the outer layer, and the total amount of insulation material is fixed. The heat source function is denoted by $f$.  For a given amount of insulating material $m$ and given volume of the conductor $|\Omega|$, the aim is to determine the shape of $\Omega$, and the related optimal distribution of insulating material $h$ with mass constraint $\int_{\partial \Omega}hd\sigma=m$, in order to maximize the heat content. Here $m$ is a fixed constant.

In other words, for a fixed $m>0$, the aim is to find out the shape of $\Omega$, which maximizes $A(\Omega):=\max_{h\in \mathcal{H}_m}\int_{\Omega}u_hdx$, where $u_h$ solves 
\begin{align}
    \label{uh}
\begin{cases}
-\Delta u=f\quad &\mbox{in $\Omega$}\\
h\frac{\partial u}{\partial \nu}+ u=0\quad &\mbox{on $\partial \Omega$}
\end{cases}    
\end{align}
and 
$$\mathcal{H}_m:=\{h \in L^1(\partial \Omega): h\ge 0 ,\, \int_{\partial \Omega}h \, d\sigma=m\}.$$

It is proved in \cite{DNST} that when $f \equiv 1$, balls uniquely maximize $A(\cdot)$ among domains with fixed volume. Here, as an application of  Lemma \ref{lemma} and the Talenti-type comparison result in \cite{ACNT}, we are able to slightly extend the result in \cite{DNST} to the case of radially decreasing heat sources in two dimensions.
\begin{proposition}
\label{thermaltheorem}
    Let $f$ be a radially decreasing function about the origin. Then among domains with fixed volume, in two dimensions, the ball centered at the origin is a maximizer to $A(\cdot)$.
\end{proposition}

\begin{proof}
For $h\in \mathcal{H}_m$, we consider the perturbed equation
\begin{align}
    \label{uhl}
\begin{cases}
-\Delta u=f\quad &\mbox{in $\Omega$}\\
\frac{\partial u}{\partial \nu}+\frac{\lambda}{1+\lambda h} u=0\quad &\mbox{on $\partial \Omega$},
\end{cases}       
\end{align}and we let $u_{h}^\lambda$ be the solution to \eqref{uhl}. By the Talenti-type $L^1$ comparison results for general boundary conditions proved in \cite{ACNT}, for any such $h$, we have:
\begin{align}
\label{acnttheorem}
    \int_{\Omega}u_h^\lambda \,dx \le \int_{\Omega^\sharp} v^\lambda \, dx,
\end{align}where $v^\lambda$ is the solution to 
\begin{align}
    \label{betabar}
\begin{cases}
-\Delta v=(f\Big|_{\Omega})^\sharp \quad &\mbox{in $\Omega^\sharp$}\\
\frac{\partial v}{\partial \nu}+\bar{\beta}^\lambda v=0\quad &\mbox{on $\partial \Omega^\sharp$},
\end{cases}
\end{align}with $\bar{\beta}^\lambda$ being a constant given by \begin{align}
    \label{betabarlimit}
\frac{P(\Omega^\sharp)}{\bar{\beta}^\lambda}=\int_{\partial \Omega}\frac{1+\lambda h}{\lambda}\, d\sigma\rightarrow \int_{\partial \Omega}h\, d\sigma=m,\quad \mbox{as $\lambda \rightarrow \infty$.}    
\end{align}
From \eqref{acnttheorem}-\eqref{betabarlimit}, the limit behavior of $u^\lambda_h$ and $v^\lambda$, Lemma \ref{lemma} and the maximum principle, we have
\begin{align}
    \label{final}
\int_{\Omega}u_h \, dx\le  \int_{\Omega^\sharp} w\, dx,
\end{align}where $w$ is the solution to 
\begin{align}
    \label{wsolution}
\begin{cases}
-\Delta w=f \quad &\mbox{in $\Omega^\sharp$}\\
\frac{\partial w}{\partial \nu}+\frac{1}{m}P(\Omega^\sharp) w=0\quad &\mbox{on $\partial \Omega^\sharp$}.
\end{cases}
\end{align}
Such $w$ is exactly the solution to \eqref{uh} with $\Omega=\Omega^\sharp$ and $h=m/P(\Omega^\sharp)$, which is a constant function belonging to $\in \mathcal{H}_m$. Hence we conclude that the supremum of $A(\cdot)$ is attained at $\Omega=\Omega^\sharp$ and the constant function belonging to $\mathcal{H}_m$ along the boundary is the optimal distribution in order for $A(\cdot)$ to achieve its supremum.
\end{proof} 

\vskip 0.3cm

We also mention that as a consequence of Theorem \ref{thm:Robin stable iff condition} and the proof of Proposition \ref{thm:global}, we immediately have an example illustrating that the Talenti-type pointwise comparison result for Robin boundary conditions fails for $\beta$ lying in a finite open interval, even if the source is radially decreasing and the domain under consideration is a small translation of a ball centered at the origin.

Indeed, according to (2)-(3) in Corollary \ref{cor1} and Remark \ref{translation}, there exists a pair $(f,B_{R})$ such that when the domain $\Omega$ is a minor translation of $B_R$, $J_\beta^{f}(\Omega)<J_\beta^{f}(B_R)$ for $\beta$ lying in a finite open interval. However, if the Robin Talenti pointwise comparison results were true for such translated domain $\Omega$ and for the $\beta$ lying in the range, then the similar argument as in the proof of Proposition \ref{thm:global} would imply that $J_{\beta}^{f}(\Omega)\ge J_\beta^{f} (B_R)$. This leads to a contradiction. 

\vskip 0.3cm

Our counterexample above is essentially from a variational point of view, which is quite different from the counterexample in \cite{AN}, which addresses a particular case $\beta=1/2$.

At the end of the section, we further discuss the example in \cite{AN}. Let $\Omega\subset\R^{2}$ be the union of two disks $D_{1}$(centered at the origin) and $D_{\epsilon}$ with radii $1$ and $\epsilon$, respectively. Let $\beta>0$ be a constant, and we set $f=1$ on $D_{1}$ and $f=0$ on $D_{\epsilon}$. 

We have
\begin{align*}
u_{\Omega}^{f,\beta}(x)=
\begin{cases}
0, \quad &x\in D_{\epsilon}\\
\frac{1}{2\beta}+\frac{1}{4}-\frac{1}{4}|x|^{2}, &x\in D_{1}
\end{cases}
\end{align*}
and 
\begin{align*}
u_{\Omega^{\sharp}}^{f^{\sharp},\beta}(x)=
\begin{cases}
\frac{1}{2\beta c}+\frac{1}{2}\ln c +\frac{1}{4}-\frac{1}{4}|x|^{2}, &|x|\le 1\\
-\frac{1}{2}\ln|x|+\frac{1}{2\beta c}+\frac{1}{2}\ln c, \quad &1<|x|<c,
\end{cases},
\end{align*}
where $\Omega^{\sharp}=B_{c}$ is a disk centered at the origin with radius $c=\sqrt{1+\epsilon^{2}}$. Hence
\begin{align*}
\Vert u_{\Omega^{\sharp}}^{f^{\sharp},\beta}\Vert _{L^{\infty}(\Omega^{\sharp})}-\Vert u_{\Omega}^{f,\beta}\Vert_{L^{\infty}(\Omega)}=\frac{1}{2\beta}\left(\frac{1}{c}-1\right)+\frac{1}{2}\ln c.
\end{align*}
Let $$\beta_0=\frac{1-\frac{1}{c}}{\ln c}.$$
When $\beta<\beta_0$, then $\Vert u_{\Omega^{\sharp}}^{f^{\sharp},\beta}\Vert _{L^{\infty}(\Omega^{\sharp})}<\Vert u_{\Omega}^{f,\beta}\Vert_{L^{\infty}(\Omega)}=\Vert (u_{\Omega}^{f,\beta})^\#\Vert_{L^{\infty}(\Omega)}$, and thus the Robin $L^\infty$ comparison fails for this range of $\beta$.

Therefore, motivated by the third statement in Corollary \ref{cor1} and the above example, it would be interesting to answer the following question:

\begin{question}
    Given a generic bounded Lipschitz domain $\Omega\subset \mathbb{R}^n$, does there exist a constant $\beta_{0}>0$ depending on $\Omega$, such that when $\beta>\beta_0$,
$$\Vert u_{\Omega^{\sharp}}^{f^{\sharp},\beta}\Vert _{L^{\infty}(\Omega^{\sharp})}\ge \Vert u_{\Omega}^{f,\beta}\Vert_{L^{\infty}(\Omega)}$$
for any $f \in L^2(\Omega)$, while when $\beta<\beta_0$, there exists $f\in L^2(\Omega)$ such that the above inequality fails? 
\end{question}

	\section{appendix: proof of Proposition \ref{geometricevolution}}
	
	\begin{proof}[Proof of Proposition \ref{geometricevolution}]
		Given $p \in M$, we let $(x^1, x^2, \cdots, x^{n-1})$ be local coordinates of $M$ near $p$, and thus they also serve as local coordinates of $M_t$ near $p(t):=F_t(p) \in M_t$. When taking derivatives of a vector field $\eta$, at $p(t)$ along $M_t$ with respect to the same local coordinates, we use the abbreviation that 
		\begin{align}
			\label{convention}
			\partial_\alpha \eta(p(t)):=\frac{\partial}{\partial x^\alpha}(\eta \circ F_t(p))=\left(\nabla \eta(p(t))\right) \partial_\alpha F_t(p),
		\end{align}where $\partial_\alpha F_t(p):=(\nabla F_t(p)) \frac{\partial p}{\partial x^\alpha}$ serves as the basis of $T_{p(t)}M_t$,\, $1\le \alpha \le n-1$. Let $g(t)(p)$ be the metric tensor on $M_t$ induced by $F_t$ at $p(t)$, and we write $g_{\alpha\beta}(t)(p)=\partial_{\alpha}F_t(p)\cdot \partial_\beta F_t(p)$. Then we have
		\begin{align}
			\label{gderi}
			g_{\alpha\beta}'(t)(p)=\frac{d}{dt}<\partial_{\alpha}F_t(p),\partial_{\beta}F_t(p)>=\partial_{\alpha} \eta(p(t)) \cdot \partial_{\beta} F_t(p)+\partial_{\alpha}F_t(p)\cdot \partial_{\beta}\eta(p(t)).
		\end{align}
		Let $|g(t)|$ be the determinant of the $(n-1)\times (n-1)$ matrix $(g_{\alpha\beta}(t))$. Let $g^{\alpha \beta}$ be the inverse matrix of $g_{\alpha \beta}$. Since \begin{align*}
			\nabla^ {M_t}\eta(p(t))=g^{\alpha \beta}(t)(p)\,\partial_\alpha \eta(p(t)) \otimes \partial_\beta F_t(p),
		\end{align*}
		\eqref{gderi} implies that
		\begin{align*}
			&\frac{d}{dt}\sqrt{|g(t)(p)|}\nonumber\\
            =&\frac{1}{2\sqrt{|g(t)(p)|}}|g(t)(p)|g^{\alpha\beta}(t)(p) \left(\partial_{\alpha} \eta(p(t)) \cdot \partial_{\beta} F_t(p)+\partial_{\alpha}F_t(p)\cdot \partial_{\beta}\eta(p(t))\right)\nonumber\\=&(\div_{M_t}\eta(p(t)))\sqrt{|g(t)(p)|}. 
		\end{align*}
		Since at $p(t)$ one has $d\sigma_t=\sqrt{|g(t)|}dx^1\cdots dx^{n-1}$, we immediately have \eqref{evolutionofarea2}.
		
		Next, let $\eta^T(p(t))$ be the tangential component of $\eta(p(t))$ at $T_{p(t)}M_t$. Then we have
		\begin{align}
			\label{etaT}
			\eta^T(p(t))=g^{\alpha\beta}(t)(p)<\eta(p(t)),\partial_{\beta}F_t(p)>\partial_{\alpha}F_t(p).
		\end{align}Therefore, since $\nu_p'(t)\in T_{p(t)}M_t$, where $\nu_p(t):=\nu(t)(p)$ is the unit normal to $T_{p(t)}M_t$ at $p(t)$, by \eqref{convention} and \eqref{etaT} we have
		\begin{align*}
			&\frac{d}{dt}\left(\eta(p(t))\cdot \nu_p(t)\right)\\
			=&\Big((\nabla \eta(p(t))) \eta(p(t))\Big)\cdot \nu_p(t)+<\eta(p(t)), g^{\alpha\beta}(t)(p)\Big( \nu_p'(t)\cdot \partial_{\alpha}F_t(p)\Big) \partial_{\beta} F_t(p)>\\
			=&\Big((\nabla \eta(p(t))) \eta(p(t))\Big)\cdot \nu_p(t)-g^{\alpha\beta}(t)(p)<\nu_p(t),\partial_{\alpha} \eta(p(t))><\eta(p(t)),\partial_{\beta} F_t(p)>\\
			=&\Big((\nabla \eta(p(t))) \eta^T(p(t))\Big)\cdot \nu_p(t)+\Big((\nabla \eta (p(t))) ((\eta(p(t))\cdot \nu_p(t))\nu_p(t) ) \Big)\cdot \nu_p(t)\\
			&\quad -<\nu_p(t),(\nabla \eta(p(t))) \eta^T(t)>\\
			=&\Big((\nabla \eta(p(t))) (\eta(p(t))\cdot \nu_p(t))\nu_p(t) ) \Big)\cdot \nu_p(t)\\
			=&(\eta(p(t))\cdot\nu_p(t)) \Big(\div \eta(p(t)) -\div_{M_t}\eta(p(t))\Big),
		\end{align*}where the last equality is obtained by taking the trace of the following identity:
		\begin{align*}
			\nabla \eta (p(t)) =\nabla ^{M_t}\eta(p(t)) +\nabla \eta(p(t)) \nu_p(t)\otimes \nu_p(t).
		\end{align*}
		Hence \eqref{evolutionofnormalspeed} is proved.
		
		Next, we compute the derivative of $h(t)$ at $p(t)$. Since
        \begin{align*}
            h_{\alpha \beta}(t)(p)=\partial_\alpha \nu_p(t)\cdot \partial_\beta F_t(p),
        \end{align*}
        using the notation \eqref{convention}, we have
		\begin{align*}
			h_{\alpha\beta}'(t)(p)=&\partial_\alpha\nu_p'(t)\cdot \partial_\beta F_t(p)+\partial_\alpha\nu_p(t)\cdot \partial_\beta\eta(p(t))\\
			=&\partial_\alpha(\nu_p'(t)\cdot \partial_\beta F_t(p))-\nu_p'(t)\cdot \partial_\alpha\partial_\beta F_t(p)+\partial_\alpha \nu_p(t)\cdot \partial_\beta\eta(p(t))\\
			=&-\partial_\alpha(\nu_p(t)\cdot \partial_\beta\eta(p(t)))-g^{\gamma \zeta}(t)(p)(\nu_p'(t)\cdot\partial_\gamma F_t(p))<\partial_\zeta F_t(p),\partial_\alpha\partial_\beta F_t(p)>
            \\ &+\partial_\alpha\nu_p(t)\cdot \partial_\beta \eta(p(t)),\quad \mbox{where $\gamma, \zeta=1,\cdots, n-1$}\\
			=&-<\nu_p(t),\partial_\alpha\partial_\beta\eta(p(t)>-g^{\gamma \zeta}(t)(\nu_p'(t)\cdot\partial_\gamma  F_t(p)) \Gamma_{\alpha \beta}^\delta(t)(p)g_{\zeta \delta}(t)(p), \\ &\mbox{where $\Gamma_{\alpha\beta }^\delta(t)(p)$ are Christoff symbols on $M_t$ at $p(t)$}\\
			=&-<\nu_p(t),\partial_\alpha\partial_\beta\eta(p(t))>+\Gamma_{\alpha\beta}^\gamma(t)(p)<\nu_p(t),\partial_\gamma\eta(p(t))>\\
			=&-<\nabla_\alpha\nabla_\beta\eta(p(t)),\nu_p(t)>.
		\end{align*}Hence \eqref{hijevolution} is proved.
		
		Last, to prove \eqref{evoonsphere}, we first note that by \eqref{gderi} and \eqref{hijevolution}, we have
		\begin{align*}
			& H'(t)(p)\\=&\frac{d}{dt}\left(g^{\alpha\beta}(t)(p) h_{\alpha \beta}(t)(p)\right)\\
			=&-2g^{\alpha \gamma}(t)(p) g^{\delta \beta}(t)(p)<\partial_\delta\eta(p(t)),\partial_\gamma F_t(p)>h_{\alpha\beta}(t)(p)-g^{\alpha \beta}(t)(p)<\nabla_\alpha \nabla_\beta\eta((p(t)),\nu_p(t)>\\
			=&-2g^{\alpha \gamma}(t)(p)g^{\delta \beta}(t)(p)<\partial_\delta\eta(p(t)),\partial_\gamma F_t(p)>h_{\alpha\beta}(t)(p)-<\Delta_{M_t}\eta(p(t)),\nu_p(t)>.
		\end{align*}Since on the sphere of radius $R$, $h_{\alpha \beta}=\frac{g_{\alpha \beta}}{R}$, and using \eqref{tangentialdivergence}, we have
		\begin{align}
			\label{l0}
			H'(0)=-\frac{2}{R}\div_{M}\eta-<\Delta_M\eta,\nu>.
		\end{align}Since $<\Delta_M\eta,\nu>$ does not depend on the choice of coordinates, in the following we choose normal coordinates to do the computation. Let $\eta=\eta^T+\zeta\nu$. Then using normal coordinates we have
		\begin{align}
			\label{l1}
			<\Delta_M\eta^T,\nu>=&\partial_\alpha<\partial_\alpha\eta^T,\nu>-<\partial_\alpha\eta^T,\partial_\alpha\nu>\nonumber\\
			=&-\partial_\alpha<\eta^T,\partial_\alpha\nu>-h_\alpha^\beta<\partial_\alpha\eta^T,\partial_\beta F>\nonumber\\
			=&-\partial_\alpha\left(h_\alpha^\beta<\eta^T,\partial_\beta F>\right)-h_\alpha^\beta<\partial_\alpha\eta^T,\partial_\beta F>\nonumber\\
			=&\frac{1}{R}\left(-\partial_\alpha\left(g_\alpha^\beta<\eta^T,\partial_\beta F>\right)-g_\alpha^\beta<\partial_\alpha\eta^T,\partial_\beta F>\right),\quad \mbox{since $\frac{g_{\alpha\beta}}{R}=h_{\alpha\beta}$}\nonumber\\
			=&\frac{1}{R}\left(-\partial_\alpha<\eta^T,\partial_\alpha F>-<\partial_\alpha\eta^T,\partial_\alpha F>\right),\quad\mbox{since $g_\beta^\gamma=\delta_{\beta \gamma}$ and $\partial_\alpha g_\beta^{\gamma}=0$}\nonumber\\
			=& \frac{1}{R}\left(-2\div_{M}\eta^T-<\eta^T,\Delta_MF>\right)\nonumber\\
			=&-\frac{2}{R}\div_M\eta^T,\quad \mbox{since $\Delta_MF=-H\nu \perp \eta^T$.}
		\end{align}Direct computation also leads to
		\begin{align}
			\label{l2}
			<\Delta_M(\zeta\nu),\nu>=\Delta_M\zeta-\frac{n-1}{R^2}\zeta.
		\end{align}
		Since on the sphere of radius $R$,
		\begin{align*}
			\div_{M}\eta=\div_{M}\eta^T+\frac{n-1}{R}\zeta,
		\end{align*}thus by \eqref{l0}-\eqref{l2}, we obtain \eqref{evoonsphere}.
		
	\end{proof}


\begin{thebibliography}{10}
\bibitem{ACNT}Alvino A, Chiacchio F, Nitsch C, Trombetti C. Sharp estimates for solutions to elliptic problems with mixed boundary conditions. Journal de Mathématiques Pures et Appliquées. 2021 Aug 1;152:251-61.
			
\bibitem{AL25}Acampora P, Lamboley J. Sharp quantitative Talenti's inequality in particular cases. arXiv preprint arXiv:2503.07337. 2025 Mar 10.
				
\bibitem{AN} Alvino A, Nitsch C, Trombetti C. A Talenti comparison result for solutions to elliptic problems with Robin boundary conditions. Communications on Pure and Applied Mathematics. 2023 Mar;76(3):585-603.
\bibitem{AP} Alvino A, Lions PL, Trombetti G. A remark on comparison results via symmetrization. Proceedings of the Royal Society of Edinburgh Section A: Mathematics. 1986 Jan;102(1-2):37-48.				
\bibitem{BW} Bandle C, Wagner A. Second variation of domain functionals and applications to problems with Robin boundary conditions. arXiv preprint arXiv:1403.2220. 2014 Mar 10.
		



\bibitem{Buttazzo20} van den Berg M, Buttazzo G, Pratelli A. On relations between principal eigenvalue and torsional rigidity. Communications in Contemporary Mathematics. 2021 Dec 17;23(08):2050093.

\bibitem{BBN}Bucur D, Buttazzo G, Nitsch C. Symmetry breaking for a problem in optimal insulation. Journal de Mathématiques Pures et Appliquées. 2017 Apr 1;107(4):451-63.

\bibitem{BBN1} 
Bucur D, Buttazzo G, Nitsch C. Two optimization problems in thermal insulation. Notices of the AMS. 2017 Sep;64(8).
		
	\bibitem{Bucur} Bucur D, Giacomini A. Faber–Krahn inequalities for the Robin-Laplacian: A free discontinuity approach. Archive for Rational Mechanics and Analysis. 2015 Nov;218(2):757-824.
	\bibitem{BS25}Barbato L, Salerno F. Talenti comparison results for solutions to $ p $-Laplace equation on multiply connected domains. arXiv preprint arXiv:2504.06103. 2025 Apr 8.
    
\bibitem{Buttazzo} 
Buttazzo G. Thin insulating layers: the optimization point of view. In Proceedings of “Material Instabilities in Continuum Mechanics and Related Mathematical Problems”, Edinburgh 1985 (Vol. 1986, pp. 11-19).
	
	\bibitem{CLW23}Chen D, Li H, Wei Y. Comparison results for solutions of Poisson equations with Robin boundary on complete Riemannian manifolds. International Journal of Mathematics. 2023 Jul 23;34(08):2350045.
		
		\bibitem{CNT24}Celentano A, Nitsch C, Trombetti C. A Talenti comparison result for a class of Neumann boundary value problems. arXiv preprint arXiv:2405.05392. 2024 May 8.
        
		\bibitem{Dam} Dambrine M. On variations of the shape Hessian and sufficient conditions for the stability of critical shapes. Racsam. 2002;96:95-121.
		
		
		\bibitem{DP00} Dambrine M, Pierre M. About stability of equilibrium shapes. ESAIM: Mathematical Modelling and Numerical Analysis. 2000 Jul;34(4):811-34.
		
\bibitem{DNST} Della Pietra F, Nitsch C, Scala R, Trombetti C. An optimization problem in thermal insulation with Robin boundary conditions. Communications in Partial Differential Equations. 2021 Dec 2;46(12):2288-304.	
		
		
		\bibitem{DLW} 
		Du H, Li Q, Wang C. Compactness of $M$-uniform domains and optimal thermal insulation problems. Advances in Calculus of Variations. 2023 Jan 1;16(1):17-43.
		
		\bibitem{H}  Hadamard J. M\'emoire sur le probl\'eme d’analyse relatif l’\'equilibre des plaques \' elastiques encastr\' ees. Imprimerie nationale; 1908.
        \bibitem{He2025} He J, Li Q, Yang H, Wei J. Flow approach on Riesz type nonlocal energies. arXiv preprint arXiv:2505.19655. 
		
        \bibitem{Henrot2}Henrot A. Extremum problems for eigenvalues of elliptic operators. Springer Science \& Business Media; 2006 Aug 29.
		
		
		
		\bibitem{nshap}
	Henrot A. Shape optimization and spectral theory. De Gruyter; 2017.
		
		
		\bibitem{Henrot}
		 Henrot A, Pierre M. Variation et optimisation de formes, Math\'{e}matiques \& Applications (Berlin), 48, Springer, Berlin, 2005.
		
		\bibitem{HLL1} Huang Y, Li Q, Li Q. Stability analysis on two thermal insulation problems. Journal de Mathématiques Pures et Appliquées. 2022 Dec 1;168:168-91.

        \bibitem{H2025}Huang Y, Li Q, Xie S, Yang H. Flow approach on the monotonicity of shape functionals. arXiv preprint arXiv:2502.09485. 2025 Feb 13.
		
		
		\bibitem{Serrin}Serrin J. A symmetry problem in potential theory. Archive for Rational Mechanics and Analysis. 1971 Jan;43:304-18.
		
		\bibitem{Talenti} Talenti G. Elliptic equations and rearrangements. Annali della Scuola Normale Superiore di Pisa-Classe di Scienze. 1976;3(4):697-718.
		
		\bibitem{Hardy} Hardy GH, Littlewood JE, Pólya G. Inequalities. Cambridge university press; 1952.
		
		\bibitem{PS} P\'olya G, Szeg\"o G. Isoperimetric Inequalities in Mathematical Physics.(AM-27).
		
		\bibitem{Lions} Lions PL.  Quelques remarques sur la sym\'{e}trisation de Schwartz,  In Nonlinear Partial Differential Equations and their Application, Coll6ge de France, Seminar 1978 (No. 1, pp. 308-319).

		
		\bibitem{Kesavan} Kesavan S. Some remarks on a result of Talenti. Annali della Scuola Normale Superiore di Pisa-Classe di Scienze. 1988;15(3):453-65.
		
    		
		

	\end{thebibliography}
\end{document}